\newtheorem{thm}{Theorem}[section]
\newtheorem*{claim}{Claim}
\newtheorem{lem}[thm]{Lemma}
\newtheorem{rem}[thm]{Remark}
\newtheorem{prop}[thm]{Proposition}
\newtheorem{cor}[thm]{Corollary}
\theoremstyle{definition}
\newtheorem{defi}[thm]{Definition}
\newtheorem{longRem}[thm]{Remark}
\newtheorem{longEx}[thm]{Example}
\newtheorem*{thmA}{Theorem A}
\newtheorem*{thmB}{Theorem B}
\DeclareMathOperator{\Cay}{Cay}
\DeclareMathOperator{\id}{id}
\DeclareMathOperator{\diam}{diam}
\DeclareMathOperator{\locdim}{loc\ \!dim}
\DeclareMathOperator{\asdim}{asdim}
\DeclareMathOperator{\eqasdim}{eq-asdim}
\DeclareMathOperator{\heqasdim}{\!-eq-asdim}
\def\feqasdim{\cF\!\heqasdim}
\def\U{{\mathcal U}}
\def\V{{\mathcal V}}
\def\W{{\mathcal W}}
\def\N{{\mathbb N}}
\def\Z{{\mathbb Z}}
\def\F{{\mathbb F}}
\def\cF{{\mathcal F}}
\def\eps{\varepsilon}
\def\vcyc{{\mathcal{VC}\text{yc}}}
\def\dX{{\partial X}}
\def\bX{{\myol{X}}}
\def\oB{{\myol{B}}}
\def\st{\ | \ }
\newcommand{\myol}[2][3]{{}\mkern#1mu\overline{\mkern-#1mu#2}} 
\newcommand{\one}{{\mathds{1}}}
\newcommand*{\defeq}{\mathrel{\vcenter{\baselineskip0.5ex \lineskiplimit0pt
                     \hbox{\scriptsize.}\hbox{\scriptsize.}}}%
                     =}
\author{Damian Sawicki\thanks{The author was partially supported by the Foundation for Polish Science with the grant HOMING PLUS Bis/2011-4/6.}
}
\affil{Institute of Mathematics, Polish Academy of Sciences\\ \'Sniadeckich 8, 00-656 Warsaw, Poland\\
www.impan.pl/\~{}dsawicki}
\date{}
\title{On equivariant asymptotic dimension}
\begin{document}

\maketitle

\paragraph{Abstract.}
The work discusses equivariant asymptotic dimension (also known as ``wide equivariant covers'', ``$N$-$\cF$-amenability'' or ``amenability dimension'', and ``$d$-BLR condition'') and its generalisation, transfer reducibility, which are versions of asymptotic dimension invented for the proofs of the Farrell--Jones and Borel conjectures.

We prove that groups of null equivariant asymptotic dimension are exactly virtually cyclic groups. We show that a covering of the boundary always extends to a covering of the whole compactification. We provide a number of characterisations of equivariant asymptotic dimension in the general setting of homotopy actions, including equivariant counterparts of classical characterisations of asymptotic dimension. Finally, we strengthen the result of Mole and R\"uping about equivariant refinements from finite groups to infinite groups.

\paragraph{Mathematics Subject Classification (2010).}\hspace{-0.35cm} 20F65 (18F25, 20F67, 57M07).

\paragraph{Keywords.} Equivariant cover, asymptotic dimension, homotopy action, transfer reducible group, Farrell--Jones conjecture.

%
%

\section*{Introduction}\label{S:intro}

The concept of equivariant asymptotic dimension was introduced by Bartels, L\"uck, and Reich in \cite{coversForHyperbolic}. Proving finiteness of equivariant asymptotic dimension was a major technical step in the proof of the Farrell--Jones conjecture for hyperbolic groups \cite{FarrellJonesForHyperbolic}.
A generalisation of this property -- transfer reducibility (see Section \ref{sec:characterisations}) -- was used to prove the Farrell--Jones conjecture for CAT(0) groups \cite{Borel}. Then, the Borel conjecture was derived for a class of groups containing hyperbolic and CAT(0) groups \cite{Borel}.

Equivariant asymptotic dimension and transfer reducibility have been extensively studied in the last years, mostly as a tool to prove the Farrell--Jones conjecture{.
The scope of this research involves} $\mathrm{GL}_n(\Z)$ \cite{GLnZ} and other linear groups \cites{refinements, GLn}, virtually solvable groups \cite{virtuallySolvable}, CAT(0) groups \cites{flowForCAT0, CAT0}, and relatively hyperbolic groups \cite{coarseFlowSpaces}. Very recently, in \cite{long&thin}, a new construction of covers was proposed, which -- in particular -- provides improved bounds on equivariant asymptotic dimension of hyperbolic groups. There are more positive results regarding transfer reducibility {\cites{GLnZ, GLn, virtuallySolvable, flowForCAT0, CAT0}} than equivariant asymptotic dimension \cites{coversForHyperbolic, coarseFlowSpaces}, because its definition is formally less restrictive. However, it seems to be an open question whether the two notions are equivalent, cf. \cite{onProofs}*{Remark 3.15}.

All the known proofs \cites{long&thin,coarseFlowSpaces,coversForHyperbolic} showing finiteness of equivariant asymptotic dimension are complex and involve the notion of (coarse) flow space. Some elementary constructions, even in the simplest cases such as that of the free group, are unknown and desired, cf. \cite{onProofs}*{Remark 3.12}. We make a step in this direction, showing that it is enough to study the equivariant asymptotic dimension $\eqasdim G\times\dX$ for the boundary $\dX$. More precisely, we describe a method of extending coverings from the boundary and obtain the following result.

\begin{thmA}[Theorem \ref{thm:extendingFromBoundary}] Under appropriate assumptions, \vspace{-.1cm}
$$ \eqasdim G\times \bX \leq \eqasdim G\times \partial X + \dim X.$$ 
\end{thmA}

Another quantitative result is the following characterisation of the situation when the equivariant asymptotic dimension vanishes, which yields, as a corollary, a geometric characterisation of virtually cyclic groups.

\begin{thmB}[Theorem \ref{thm:zero-dim}] For a family of groups $\cF$, $\feqasdim G = 0$ if and only if $\cF$ contains a finite-index subgroup of $G$.
\end{thmB}

The notion of equivariant asymptotic dimension relates to some other concepts, most importantly to asymptotic dimension (but also to amenable actions). The similarity of definitions and quantitative relations are discussed in Subsection \ref{S:zeroDim}. 
The second part of the paper,
Section \ref{sec:characterisations},
is devoted to
providing a number of different characterisations of equivariant asymptotic dimension and transfer reducibility (Theorem \ref{thm:characterisations}). Interestingly, appropriate forms of characterisations invented originally for asymptotic dimension are still valid in the very general framework of homotopy actions (transfer reducible groups). In 
Proposition \ref{prop:equivariantMultiplicity}, we present two more characterisations and a different proof (not using metrisability) of their equivalences, assuming that we deal with ordinary (not homotopy) group actions.

In {\appendixname} \ref{equivariantTopological}, we strengthen the result of \cite{refinements} stating that for an equivariant covering one can find an equivariant refinement of dimension at most equal to the dimension of the space. In that sense, ``equivariant topological dimension'' is equal to the topological dimension. The theorem was originally formulated for finite groups and we generalise it to infinite groups {provided the action is assumed to be proper}. It is used in Subsection \ref{s:roleOfX} in the proof of Lemma \ref{lem:interiorCovering}, but can be read independently from the rest of the paper.

%
%

\section{Genuine group actions -- the vanishing theorem and extending coverings from the boundary}\label{s:first}

\subsection{Definition}

Let us start by fixing some notation. The metric neighbourhood of a subset $A$ of radius $r$ will be denoted by $B(A,r) = \bigcup_{x\in A}B(x,r)$ and $\oB(x,r)$ will denote a closed ball. When a group $G$ acts on a topological space $X$ (on a set $Y$), we will shortly say that $X$ ($Y$) is a $G$-space (a $G$-set).
Sometimes we will write ``for all $\alpha<\infty$...'' to denote ``for all ${\alpha\in(0,\infty)}$...'' in order to clarify that the following condition is trivial for ``small'' $\alpha$ and interesting for ``large'' ones.

Unless stated otherwise, we will assume that $G$ is a finitely generated group with a fixed word-length metric, and $\bX$ will denote a compact $G$-space.

\begin{defi}\label{defi:family}
A \emph{family $\mathcal F$ of subgroups} of a group $G$ is a set of subgroups closed under conjugation and taking subgroups.

A family $\cF$ is \emph{virtually closed} if for every $H\in\cF$ and $H\leq H'\leq G$ such that
$[H':H]<\infty$, also $H'\in \cF$. 
\end{defi}

Our considerations are general enough to hold for any family $\cF$ { as in Definition \ref{defi:family}}. However, in the context of the Farrell--Jones conjecture it is the (virtually closed) family of virtually cyclic subgroups, denoted $\vcyc$, that appears most naturally \cites{flowForCAT0,coversForHyperbolic,Borel,FarrellJonesForHyperbolic}.

\begin{defi}\label{Fsubset} Let $Y$ be a $G$-{set} and $\cF$ be a family of subgroups of~$G$. A subset $U\subseteq Y$ is called an \emph{$\cF$-subset} if:
\begin{enumerate}[label={(\alph*)}]
\item elements $gU$ of the orbit of $U$ are either equal or disjoint,
\item the stabiliser of $U$, $G_U = \{ g\in G \st gU=U \}$, is a member of $\cF$.
\end{enumerate}
\end{defi}

A cover that consists of $\cF$-subsets and is $G$-equivariant will be called an \emph{$\cF$-cover}. The name ``equivariant asymptotic dimension'' comes from the fact that the coverings in its definition are $\cF$-covers.

For a family of subsets $\U$ of set $Y$, by $\dim \U$ (the dimension of $\U$) we will denote the {value} $\sup_{y\in Y} | \{U\in \U \st y\in U \}| - 1$, where $|A|$ is the cardinality of $A$.

\begin{defi} Let $Y$ be any set and $\U$ be a covering of $G\times Y$. We say that $\alpha<\infty$ is \emph{a $G$-Lebesgue number} of $\U$, given that for each $(g,y)\in G\times Y$ there exists $U\in \U$ such that $\oB(g,\alpha)\times\{y\}\subseteq U$.
\end{defi}

The following definition originates in \cite{coversForHyperbolic}*{Theorem~1.1}, see also \cite{FarrellJonesForHyperbolic}*{Assumption~1.4} and \cite{coarseFlowSpaces}*{Definition 0.1}.

\begin{defi}\label{def:eqasdim}
\emph{{The} equivariant asymptotic dimension} of $G\times \bX$ with respect to family $\cF$, denoted by $\feqasdim G\times \bX$, is the smallest integer $n$ such that for every $\alpha < \infty$ there exists an open
$\cF$-cover $\U$ of $G \times \bX$ (with the diagonal $G$-action) satisfying:
\begin{enumerate}
\item $\dim(\U) \le n$,
\item $\alpha$ is a $G$-Lebesgue number of $\U$.
\end{enumerate}
If no such integer exists, we say that the dimension is infinite.
\end{defi}

When the family $\cF$ is irrelevant or clear from the context, we will skip it from notation. The coverings $\U=\U(\alpha)$ from the above definition will be called \emph{$\eqasdim$-coverings} and \emph{$\alpha$-$\eqasdim$-coverings}, in case the constant $\alpha$ is important.

\begin{longRem}
In \cite{coversForHyperbolic}*{Theorem 1.1}, $\eqasdim$-coverings were also required to be $G$-cofinite, but by compactness one can choose cofinite subcoverings from arbitrary coverings, so this requirement can be skipped.
\end{longRem}

One generalisation of equivariant asymptotic dimension (still sufficient for the Farrell--Jones conjecture) is transfer reducibility. It occurs in many flavours in the literature, but the main difference between it and equivariant asymptotic dimension is that in transfer reducibility one can choose a space $\bX$ depending on {a} parameter (for $\eqasdim$ this parameter is $\alpha$) and instead of a genuine group action a ``homotopy action'' is considered. Very roughly, in homotopy action the action of $gh$ is equal to the composition of actions of $g$ and $h$ only up to homotopy. We study this notion in Section \ref{sec:characterisations}, see in particular Definition \ref{def:homotopyAction} and Remark \ref{def:transferReducible}.

\begin{longRem}
Note that if we have a $G$-equivariant map $p\colon\myol Y\to \bX$, then $\eqasdim$-coverings of $G\times \bX$ can be pulled back to $\eqasdim$-coverings of $G\times \myol Y$. Hence, the minimal possible value of $\eqasdim G\times \bX$ for $\bX$ compact and Hausdorff is acquired for $\bX=\beta G$ -- we will sometimes call it \emph{the equivariant asymptotic dimension of $G$}.
\end{longRem}

It is not enough to restrict to $\bX=\beta G$, though, since in applications conditions similar to the following are utilised, cf. \cite{FarrellJonesForHyperbolic}*{Theorem 1.1 and Assumption 1.2}. 
\begin{itemize}
\item $\bX$ is a metrisable compactification of its $G$-invariant subset $X$,
\item $X$ is a realisation of an abstract simplicial complex,
\item $\bX$ is contractible,
\item (weak Z-set condition) there exists a homotopy $H\colon \bX\times[0,1] \to \bX$, such that $H_0=\id_\bX$ and $H_t(\bX)\subseteq X$ for every $t>0$.
\end{itemize}
However, we do not adopt any of these conditions as a convention.

In the above context, considerations may become less complicated if one constructs coverings of $G\times \dX$ rather than the whole $G\times \bX$ (where $\dX=\bX\setminus X$). The fact that the latter can be reconstructed from the former is the content of Theorem \ref{thm:extendingFromBoundary}.

A natural setting to have in mind is when the space $X$ admits a geometric action of $G$ (for example, it is a Rips complex of the group) and $\dX$ is the Gromov boundary of $G$, cf. \cite{coversForHyperbolic}.

%
%

\subsection{\texorpdfstring{The role of $\boldsymbol\bX$}{The role of cl(X)}}\label{s:roleOfX}

Since the applications of equivariant asymptotic dimension concern the group $G$, it is natural to ask what the role of $\bX$ is. What happens if we just take $\bX=\{*\}$ or, on the other extreme, drop the compactness assumption. A special case of the latter is the question whether we can drop the requirement that $\eqasdim$-coverings are open, which corresponds to equipping $\bX$ with the discrete topology. Another question is whether a decomposition of $\bX$ into invariant subspaces can be used to simplify the problem of finding $\eqasdim$-coverings.

It turns out that compactness of $\bX$ is crucial to the notion of $\eqasdim$.

\begin{rem} If the compactness assumption for $\bX$ in Definition \ref{def:eqasdim} is skipped, then for $X=G$ we have $\eqasdim G\times X = 0$.
\end{rem}

\begin{proof}
A good $\eqasdim$-covering for $G\times X$ is $\U = \{ G \times \{x\} \st x\in X \}$, which is clearly an open $\mathcal{T}$-cover of dimension $0$ with the infinite $G$-Lebesgue number, where $\mathcal T$ is the singleton family of the trivial subgroup of $G$.
\end{proof}

Clearly, the same construction of $\eqasdim$-coverings works for any discrete $X$ provided that point stabilisers belong to $\cF$. The above proof exemplifies a more general approach indicated in \cite{coversForHyperbolic}. While $\eqasdim$-coverings must be $\alpha$-large in the $G$-coordinate, making them small in the $\bX$-coordinate may be helpful in obtaining the properties desired in Definition \ref{Fsubset}. The following lemma generalises the above remark and covers a wide range of examples (e.g., the spaces considered in \cite{coversForHyperbolic}). Below, a $G$-simplicial complex such that all the stabilisers of simplices belong to $\cF$ is called an \emph{$\cF$-simplicial complex}.

\begin{lem}\label{lem:interiorCovering}
Assume that a finitely generated group $G$ acts on a topological space $X$. There is an $\feqasdim$-covering $\U_\infty$ (with $\alpha = \infty$) of the space $G\times X$ under any of the following conditions:
\begin{enumerate}[label={(\alph*)}]
\item\label{interiorCoveringA} $X$ is a finite-dimensional $\cF$-simplicial complex
(the same is true for CW-complexes);
\item\label{interiorCoveringB} $X$ is regular, the $G$-action is proper, isotropy groups belong to $\cF$, and either of the following conditions holds:
\begin{enumerate}[label={(\roman*)}]
\item\label{interiorCoveringBi} the $G$-action is cocompact;
\item\label{interiorCoveringBii} $X$ is of finite covering dimension and admits a $G$-invariant metric.
\end{enumerate}
\end{enumerate}
{In cases \ref{interiorCoveringA} and \ref{interiorCoveringBii}, we have $\dim \U_\infty\leq \dim X$.}
\end{lem}
\begin{proof}
Ad \ref{interiorCoveringA}. For each open simplex $\Delta^\mathrm{o}$ of $X$, one can construct (using a barycentric subdivision, compare the proof of implication $(\ref{cond:epsGMap}) \implies (\ref{cond:disjointEqasdim})$ in Theorem \ref{thm:characterisations} or \cite{refinements}*{Lemma 3.4} for more details) a neighbourhood $N(\Delta^\mathrm{o})$ such that neighbourhoods of simplices of the same dimension are disjoint and the family of such neighbourhoods is equivariant. Thus, the stabiliser of $N(\Delta^\mathrm{o})$ is equal to the stabiliser of $\Delta^\mathrm{o}$ and hence belongs to $\cF$. Putting $\U = \{G\times N(\Delta^\mathrm{o}) \st \Delta\in X \}$ finishes the proof, because each point $x$ of $X$ belongs to a neighbourhood of at most one simplex of each dimension.

Ad \ref{interiorCoveringB}. For each $x\in X$ we will construct its neighbourhood $U_x$ being an $\cF$-subset. By properness of the action (and $T_1$-property), we can find a neighbourhood $U_x^0$ such that the set $RS_x = \{ g \st gU_x^0 \cap U_x^0 \neq \emptyset \}$ is finite and such that $U_x^0$ is disjoint with the completion $C_x = Gx\setminus \{x\}$ of $x$ in its orbit $Gx$.

Then, using regularity of $X$, we choose a smaller neighbourhood $U_x^1$, such that its closure $\myol{U_x^1}$ is contained in $U_x^0$ -- in particular it is disjoint with $C_x$. But we have the equivalence 
$$\forall_{g: gx\neq x} gx\not\in  \myol{U_x^1} \iff
  \forall_{g: gx\neq x} x \not\in g\myol{U_x^1},$$
so the set $U_x^2 = U_x^1 \setminus \bigcup_{g: gx\neq x} g\myol{U_x^1}$ contains $x$. It is open, as the sum can be taken over the finite set $RS_x$ without affecting the difference. What we achieved is emptiness of the intersection $U_x^2 \cap gU_x^2 \subseteq \left(U_x^1 \setminus gU_x^1\right)\cap gU_x^1 = \emptyset$ for $gx\neq x$.

To handle the case $gx=x$, we do the last tweak setting $U_x = \bigcap_{g:gx=x}gU_x^2$. The intersection is finite (as the stabiliser of $x$ is a subset of $RS_x$), so we have just obtained a neighbourhood of $x$ with the stabiliser equal to the stabiliser of $x$, and conclude that $U_x$ is an $\cF$-subset.

We still need to bound the dimension of the covering. Provided that $X$ is finite-dimensional and the action is isometric, we can use Proposition \ref{prop:refinements} to find an equivariant refinement $\U_X$ of the covering $\{gU_x\st g\in G,\ x\in X\}$ with dimension at most $\dim X$.

Otherwise, we can assume that the action is cocompact. Since the quotient map $X\overset{q}\to  {X}/{G}$ is open, $\left\{q(U_x)\right\}_{x\in X}$ is an open covering of a compact set. Consequently, there is a finite family $x_0,\ldots, x_n$ such that $\left\{q(U_{x_i})\right\}_{0\leq i \leq n}$ covers $X/G$ and thus $\U_X=\left\{g U_{x_i} \st g\in G,\ 0\leq i \leq n \right\}$ covers $X$. Clearly, the dimension of $\U_X$ is at most $n$.

Finally, the family $\U_\infty = \{ G\times U \st U \in \U_X \}$ is an $\alpha$-$\eqasdim$-covering of $G\times X$ for any $\alpha\leq \infty$.
\end{proof}

We would like to mention that actually $G$-invariant coverings of $X$ (rather than of $G\times X$) were constructed in the above proof and that it relied mainly on topological properties of $X$ (not on the geometry of $G$).

Assume now that $\bX$ is a compactification of $X$ and recall that we denote $\dX=\bX\setminus X$. An $\eqasdim$-covering $\U$ of $G\times \bX$ breaks up into two invariant parts:
$$\U^{\mathrm{o}} = \{ U\in \U \st U\cap G\times \dX = \emptyset \},$$
$$\U^\partial = \{ U\in \U \st U\cap G\times \dX \neq \emptyset \}.$$
Conversely, if we are given two open $\cF$-families $\U^{\mathrm{o}}$, $\U^\partial$ of subsets of $G\times \bX$, which -- after restriction to $G\times X$ and $G\times \dX$ respectively -- have $G$-Lebesgue numbers $\alpha$, then the family $\U^{\mathrm{o}} \cup \U^\partial$ is an $\cF$-cover of $G\times \bX$ with $G$-Lebesgue number $\alpha$ and dimension at most $\dim \U^{\mathrm{o}} + \dim \U^\partial + 1$.

Hence, if the assumptions of Lemma \ref{lem:interiorCovering} are satisfied, we can always assume (at the expense of possible increase in the bound on the dimension) that $\eqasdim$-coverings $\U$ of $G\times \bX$ satisfy $\U^{\mathrm{o}}=\U_\infty$ and thus the only relevant part of $\U$ is $\U^\partial$. In other words, it is enough to deal with \emph{a neighbourhood of} the boundary to obtain a covering of $G\times \bX$.

Even more is true. An open $\cF$-cover of $G\times \dX$ can be extended to a family $\U^\partial$ of the same dimension that is open in $G\times \bX$. Thus, one can indeed restrict their attention to the boundary itself.

\begin{thm}\label{thm:extendingFromBoundary}
If $\bX$ is a metrisable compactification of $X$ and any of the assumptions of Lemma \ref{lem:interiorCovering} hold, 
then $$\eqasdim G\times\bX \leq \eqasdim G\times \dX + \dim \U_\infty + 1,$$
where $\U_\infty$ is the covering from Lemma \ref{lem:interiorCovering}.
\end{thm}

\begin{proof} For a given $\alpha<\infty$, we will define an $\alpha\heqasdim$-covering $\U$ of $G\times \bX$ as the sum of the covering $\U_\infty$ and a covering $\U^\partial$ constructed from an $\alpha$-$\eqasdim$-covering $\V$ of $G \times \dX$.

For $V\subseteq G\times\dX$ let $V_g=V \cap \left(\{g\}\times \dX\right)$ and let $\V_1 = \{V_1 \st V\in \V \}$. We will describe a dimension-preserving method of enlarging sets $Y\in\V_1$ to open subsets $W(Y)$ of ${\{1\}\times\bX\simeq}\bX$. It will satisfy:
\begin{eqnarray}\label{eq:extending;trace}
&
Y\neq Y'\implies W(Y)\neq W(Y');
&\\
&\label{eq:extending;emptyIntersection}
\displaystyle\bigcap_{1\leq i\leq n}W(Y^i) =  W\left(\bigcap_{1\leq i\leq n}Y^i\right).
&
\end{eqnarray}

Fix a metric $d$ inducing the topology of $\bX$. As $\dim \V_1 \leq \dim \V$, every $x\in \dX$ belongs to a finite number of elements $Y^1_x, \ldots, Y^k_x$ of $\V_1$. Let $\eps(x)>0$ be such that $B(x,\eps(x)) \cap \dX \subseteq \bigcap_j Y^j_x$. For any $Y\subseteq \dX$ we define $W(Y) = \bigcup_{x\in Y} B(x,\eps(x)/2)$.

Condition (\ref{eq:extending;trace}) is clear from the fact that $W(Y)\cap \dX = Y$. We will show condition (\ref{eq:extending;emptyIntersection}) by induction. Let us denote $Y^{(n)} = \bigcap_{1\leq i\leq n} Y^i$. The base is trivial, so we assume $n>1$:
\begin{align*}
\bigcap_{1\leq i\leq n}W\left(Y^i\right) =& \bigcap_{1\leq i\leq n-1}W\left(Y^i\right)\cap W\left(Y^n\right) = W\left(Y^{(n-1)}\right) \cap W\left(Y^n\right) \\
=& \bigg(W\left(Y^{(n-1)}\setminus Y^n\right) \cup W\left(Y^{(n-1)}\cap Y^n\right)\bigg) \\
&\cap
\bigg(W\left(Y^n \setminus Y^{(n-1)}\right) \cup W\left(Y^n \cap Y^{(n-1)}\right)\bigg)\\
=& \left(W\left(Y^{(n-1)}\setminus Y^n\right) \cap W\left(Y^n \setminus Y^{(n-1)}\right)\right) \cup W\left(Y^{(n)}\right)
\end{align*}
We claim that the first summand of the right-hand side is empty. Suppose some $z$ belongs to it. Then, there must be $x\in Y^{(n-1)}\setminus Y^n$ and $y\in Y^n \setminus Y^{(n-1)}$ such that $z\in B(x, \eps(x)/2) \cap B(y, \eps(y)/2)$. Thus, $d(x,y) < \max(\eps(x), \eps(y))$. Hence, either $x\in B(y, \eps(y)) \cap \dX \subseteq \bigcap_j Y^j_y \subseteq Y^n$ (contradicting $x\in Y^{(n-1)}\setminus Y^n$), or $y\in B(x, \eps(x)) \cap \dX \subseteq \bigcap_j Y^j_x \subseteq Y^{(n-1)}$ (contradicting $y\in Y^n \setminus Y^{(n-1)}$).

Now, for $V\in \V$ with a decomposition $V = \bigcup_g \{g\} \times V_g$ let us define 
$U(V) = \bigcup_g \{g\} \times gW(g^{-1}V_g)$ (note that $g^{-1}V_g = {\left(g^{-1}V\right)}_1\in \V_1$). We have the equality $hV = \bigcup_g \{g\} \times hV_{h^{-1}g}$, so one gets:
$$U(hV) = \bigcup_{g\in G} \{g\} \times gW(g^{-1}hV_{h^{-1}g}) = \bigcup_{k\in G} \{hk\} \times hkW(k^{-1}V_k) = h U(V);$$
i.e., the obtained family is equivariant.

From (\ref{eq:extending;trace}) and the equivariance it follows that the stabiliser of $U(V)$ is equal to the stabiliser of $V$, hence it belongs to $\cF$. Condition (\ref{eq:extending;emptyIntersection}) (for $n=2$) guarantees that different translates $U(V)$, $gU(V)=U(gV)$ are disjoint and (for arbitrary $n$) it assures that the dimension of $\U^\partial = \{ U(V) \st V\in \V\}$ is equal to the dimension of $\V$. Putting $\U=\U^\partial \cup \U_\infty$ finishes the proof.
\end{proof}

\begin{longEx} The crucial part of the proof is the definition of sets $W(Y)$. For $X=\Cay(\F_2,\{a,b\})$ and its Gromov boundary as $\dX$, it is enough to define $W(Y)\cap X$ as all those points $x\in X$ such that the endpoint of any geodesic ray from $1$ via $x$ ends in $Y$ (geometrically: we take ``cones'' over the boundary).
\end{longEx}

Group $G$ can be embedded in $G\times \bX$ in various ways yielding different pullbacks of $\U$. Assume for example that the conclusion of the \v{S}varc--Milnor lemma is true, that is, the orbit map $G\owns g\mapsto gx_0 \in X$ is a quasi-isometry for some metric on $X$, and consider the pullback of $\U$ in $G$ via the map $g\mapsto (1,gx_0)$. Inverse images of $U\in \U_\infty$ will be uniformly bounded. On the other hand, we expect the inverse image of $U\in \U^\partial$ to be unbounded, as it contains neighbourhoods of ``points at infinity'', cf.\ condition \eqref{asymptoticity} in the Remarks subsection. Thus, some elements of the covering are small, independently of $\alpha$, while others are unbounded. It differs from $\asdim$-coverings (see Definition \ref{def:asdim} below), where elements of the covering grow with $\alpha$, but uniform boundedness is preserved at each step.

Nonetheless, equivariant and classical asymptotic dimension are related and we discuss it in the next subsection.

%
%

\subsection{Asymptotic dimension and the vanishing theorem}\label{S:zeroDim}

A natural question coming to mind is how equivariant asymptotic dimension is related to asymptotic dimension. Let us recall the definition.

\begin{defi}\label{def:asdim}
\emph{The asymptotic dimension} of a metric space $G$ is the smallest integer $n$ such that for all $\alpha<\infty$ there is an open covering $\U$ of $G$ such that:
\begin{enumerate}
\item $\dim(\U) \le n$,
\item for each $g \in G$ there exists $U \in \U$ such that
      $\oB(g, \alpha) \subseteq U$,
\item $\sup_{U\in\U}\diam(U)<\infty$ (uniform boundedness).
\end{enumerate}
\end{defi}

We can see that the first two conditions in the definition of $\asdim$ are analogues of the conditions for $\eqasdim$. Such a similarity occurs also for various characterisations of $\asdim$, compare the following Theorem \ref{thm:asdimCharacterisations} (see \cite{Bedlewo}*{Theorem 1} or \cite{Roe}*{Theorem 9.9}) characterising $\asdim$ with Theorem \ref{thm:characterisations} and Proposition \ref{prop:equivariantMultiplicity} characterising $\eqasdim$.

\begin{thm}\label{thm:asdimCharacterisations} Let $X$ be a metric space. The following conditions
are equivalent.
\begin{enumerate}
    \item\label{cond:asdim} $\asdim X\leq n$;
    \item\label{cond:disjoint} for every $r<\infty$ there exist uniformly bounded, $r$-disjoint families $(\U^i)$ for $0\leq i \leq n$ of subsets of $X$ such that $\bigcup_i\U^i$ covers $X$.
    \item\label{cond:epsToComplex} for every $\eps>0$ there is a uniformly cobounded, $\eps$-Lipschitz map $\phi\colon X\to K$ to a simplicial complex of dimension $n$.
    \item\label{cond:multiplicity} for every $d<\infty$ there exists a uniformly bounded
    cover $\V$ of $X$ with $d$-multiplicity at most $n+1$;
\end{enumerate}
\end{thm}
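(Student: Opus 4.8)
The plan is to run the cycle $(\ref{cond:asdim})\Rightarrow(\ref{cond:epsToComplex})\Rightarrow(\ref{cond:disjoint})\Rightarrow(\ref{cond:asdim})$ and to add the separate two-way implication $(\ref{cond:asdim})\Leftrightarrow(\ref{cond:multiplicity})$, so that everything pivots on the defining condition $(\ref{cond:asdim})$. For $(\ref{cond:asdim})\Rightarrow(\ref{cond:epsToComplex})$ I would use the \emph{nerve map}. Given $\eps$, apply Definition \ref{def:asdim} with a Lebesgue number $\alpha$ to be fixed, obtaining a uniformly bounded $\U$ with $\dim\U\le n$. Let $K$ be the nerve of $\U$; since the multiplicity of $\U$ is at most $n+1$, $K$ has dimension $\le n$. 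Define $\phi\colon X\to K\subseteq\ell_1(\U)$ by the partition of unity $\psi_U(x)=d(x,X\setminus U)/\sum_{V\in\U}d(x,X\setminus V)$. Uniform coboundedness is immediate, since $\psi_U(x)>0$ forces $x\in U$, so the preimage of the open star of the vertex $v_U$ lies in the uniformly bounded set $U$. For the Lipschitz bound, each numerator is $1$-Lipschitz, at most $n+1$ of them are nonzero near a given point, and the denominator is $\ge\alpha$ because the Lebesgue ball $B(x,\alpha)$ lies in some $U_0$, whence $d(x,X\setminus U_0)\ge\alpha$; the quotient rule then yields a Lipschitz constant of order $(n+1)/\alpha$ in the $\ell_1$ metric, so choosing $\alpha$ of order $(n+1)/\eps$ makes $\phi$ be $\eps$-Lipschitz.

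For $(\ref{cond:epsToComplex})\Rightarrow(\ref{cond:disjoint})$ I would pull back a \emph{coloured} cover of $K$. Pass to the barycentric subdivision and cover $K$ by the open stars of the barycenters $b_\sigma$, colouring each star by $\dim\sigma\in\{0,\dots,n\}$. Within one colour the stars are pairwise disjoint, because a flag contains at most one simplex of each dimension, and by an explicit computation on a single simplex (finitely many combinatorial types) they are separated by a constant $\delta_n>0$ depending only on $n$. Setting $\U^i=\{\phi^{-1}(\mathrm{St}(b_\sigma))\st\dim\sigma=i\}$, uniform coboundedness of $\phi$ makes each $\U^i$ uniformly bounded, the stars cover $K$ so $\bigcup_i\U^i$ covers $X$, and since $\phi$ is $\eps$-Lipschitz the preimages within one colour are $(\delta_n/\eps)$-separated. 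Hence for a prescribed $r<\infty$ it suffices to invoke $(\ref{cond:epsToComplex})$ with $\eps$ small enough that $\delta_n/\eps\ge r$.

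The remaining implications are elementary enlarging and shrinking. For $(\ref{cond:disjoint})\Rightarrow(\ref{cond:asdim})$, given $\alpha$ apply $(\ref{cond:disjoint})$ with $r=2\alpha$ and set $\V^i=\{B(U,\alpha)\st U\in\U^i\}$; each $\V^i$ stays disjoint since $\U^i$ was $2\alpha$-disjoint, so $\V=\bigcup_i\V^i$ satisfies $\dim\V\le n$, is uniformly bounded, and has Lebesgue number $\alpha$ because any $x$ lies in some $U\in\U^i$ with $B(x,\alpha)\subseteq B(U,\alpha)$. For $(\ref{cond:asdim})\Leftrightarrow(\ref{cond:multiplicity})$: to get $(\ref{cond:multiplicity})$, take a cover with Lebesgue number $L>d$ and shrink to the interiors $U^{(\rho)}=\{x\st d(x,X\setminus U)\ge\rho\}$ with $d<\rho<L$; these still cover, and any member meeting $B(x,d)$ must contain $x$, so the $d$-multiplicity is bounded by the ordinary multiplicity $\le n+1$. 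Conversely, from a uniformly bounded cover of $d$-multiplicity $\le n+1$ the enlargement $\{B(U,d/2)\}$ has $\dim\le n$ (a point lies in $B(U,d/2)$ iff $U$ meets $B(x,d/2)$) and Lebesgue number $d/2$, giving $(\ref{cond:asdim})$ with $d=2\alpha$.

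The main obstacle is the quantitative control in the two steps that cross between $X$ and $K$: that the nerve map's Lipschitz constant is governed by the reciprocal of the Lebesgue number, and that open stars of equal-dimensional barycenters are uniformly separated in the $\ell_1$ metric by a constant depending only on $n$. Pinning down these two constants, and tracking how $\eps\to 0$ simultaneously forces large Lebesgue numbers in $(\ref{cond:epsToComplex})$ and large separation $r$ in $(\ref{cond:disjoint})$, is where the genuine work lies; by contrast the shrink/enlarge passages are routine once the right radii are chosen.
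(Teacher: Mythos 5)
The paper does not actually prove Theorem \ref{thm:asdimCharacterisations}; it recalls it from \cite{Bedlewo} and \cite{Roe}, so your proposal can only be measured against those standard arguments --- and your architecture (nerve map, coloured cover of the complex pulled back, enlarge/shrink) is exactly the standard one. The steps $(\ref{cond:disjoint})\Rightarrow(\ref{cond:asdim})$ and $(\ref{cond:asdim})\Leftrightarrow(\ref{cond:multiplicity})$ are fine, and $(\ref{cond:asdim})\Rightarrow(\ref{cond:epsToComplex})$ is fine up to a harmless constant (the nerve map is $\eps$-Lipschitz for $\alpha$ of order $(n+1)^2/\eps$, not $(n+1)/\eps$; since $\alpha$ is free this does not matter). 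The genuine gap is in $(\ref{cond:epsToComplex})\Rightarrow(\ref{cond:disjoint})$: open stars of equal-dimensional barycenters are \emph{not} separated by any $\delta_n>0$; they are merely disjoint. Already for a single edge $[v,w]$ with barycenter $m$, the two colour-$0$ stars are $[v,m)$ and $(m,w]$, whose closures meet at $m$, so their separation is $0$ --- the ``explicit computation on a single simplex'' refutes the claim rather than proving it. The standard repair is to shrink the stars by a definite amount: for $\dim\sigma=i$ take $W_\sigma=\{p\in K \st p(b_\sigma)\ge 1/(n+2)\}$, the coordinates being taken with respect to the vertices of the subdivision (equivalently, work with stars in the \emph{second} barycentric subdivision). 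If $\sigma\neq\tau$ have equal dimension, then $b_\sigma$ and $b_\tau$ are not adjacent, so every point of $W_\tau$ has $b_\sigma$-coordinate $0$; hence same-colour sets $W_\sigma$ are $1/(n+2)$-separated, and they still cover $K$ because every point has some barycentric coordinate at least $1/(n+1)$. (The $\ell_1$ metrics of $K$ and of its subdivision are bi-Lipschitz equivalent with constants depending only on $n$, so the separation survives in the original metric.) With $W_\sigma$ in place of $\mathrm{St}(b_\sigma)$ your pullback argument works.

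There is a second, subtler gap in the same step: you bound the pulled-back sets by ``uniform coboundedness'', but under the definition stated below the theorem (bounded preimages of \emph{simplices}) this is insufficient, and in fact with that reading the implication $(\ref{cond:epsToComplex})\Rightarrow(\ref{cond:asdim})$ would be false. Example: partition $\Z^2$ into unit squares $A_i$, let $K$ be the infinite star graph with one edge $e_i$ per square, and send all of $A_i$ to the point of $e_i$ at $\ell_1$-distance $\eps$ from the centre. This map is $\eps$-Lipschitz into a $1$-dimensional complex, every simplex has preimage of uniformly bounded diameter, yet $\asdim\Z^2=2$; the culprit is that the preimage of the central vertex's star is all of $\Z^2$. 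What the pullback argument needs --- and what your nerve map in $(\ref{cond:asdim})\Rightarrow(\ref{cond:epsToComplex})$ actually delivers, as you yourself observe when you bound preimages of open stars by the sets $U$ --- is uniform boundedness of preimages of \emph{open stars of vertices}. So formulate condition $(\ref{cond:epsToComplex})$ with star-coboundedness and use that version throughout: since $W_\sigma\subseteq\mathrm{St}(b_\sigma)$ and each star of the subdivision lies inside a star of the original complex, the sets $\phi^{-1}(W_\sigma)$ are then uniformly bounded, and your cycle closes.
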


In the above theorem, a family of subsets is \emph{$r$-disjoint} if the distance of any two of its members is at least $r$; a map to a simplicial complex is \emph{uniformly cobounded} if there is a bound on diameter of inverse images of stars; a simplicial complex $K$ is viewed as a subset of $\ell_1(V(K))$, where $V(K)$ is the set of vertices of $K$; and \emph{$d$-multiplicity} of a covering means the maximal number of its elements intersecting a $d$-ball.

We would like to mention that if we relax condition (\ref{cond:epsToComplex}) so that $\phi$ is a map into the sphere in $\ell_1$ instead of an $n$-dimensional complex, then it becomes (in the case of bounded geometry metric spaces) equivalent to property A (\cite{someNotesOnA}*{Theorem 1.2.4 (6)}, see also \cite{propertyAAndAsdim}). The equivalence is established by replacing $\eps$-Lipschitz maps $x\mapsto \phi(x)$ into the unit sphere of $\ell_1(V(K))$ by maps $x\mapsto A_x$ into $\left(\bigoplus_{v} \N\right)\setminus\{0\}$, where $v\in V(K)$, such that for $d(x,y)\leq R$ we have $\| A_x - A_y \| / \|\min(A_x, A_y) \| < \eps $ (and \emph{vice versa}). A similar transition would give one more characterisation also in the equivariant case.

Guentner, Willet, and Yu \cite{G-W-Yu} show that
$\mathcal{F}in\heqasdim G\times \beta G = \asdim G$, where $\mathcal{F}in$ is the family of finite subgroups. Clearly, the equivariant asymptotic dimension decreases when $\cF$ increases, so $\feqasdim G\times \beta G \leq \asdim G$ for any $\cF\supseteq \mathcal Fin$.

On the other hand, Willett and Yu observed that the appropriate version of the argument from \cite{BrownOzawa}*{Proposition 5.2.1} gives the inequality
$$1 + \asdim G\leq (1 + \cF\heqasdim G\times \bX) \cdot (1 + \sup_{F\in \cF}\asdim F);$$
in particular, finite $\vcyc\heqasdim$ implies finite asymptotic dimension of $G$. The proof uses the language of \emph{amenable actions} (we recall the definition in the discussion after Theorem \ref{thm:characterisations}).

The definition of equivariant asymptotic dimension involves a family of groups $\cF$, for example $\vcyc$, which causes the two notions -- classical and equivariant asymptotic dimension -- to disagree even in the simplest cases. In particular, the second factor on the right-hand side of the above inequality is necessary, because it is \emph{not} true that $\asdim G \leq \feqasdim G$:

\begin{longEx}\label{CyclicToZero}
For $G=\Z$ and, say, $\bX = [-\infty,+\infty]$ with the action by translations, the one-element covering $\{ G\times \bX \}$ is an $\alpha\heqasdim$-covering for any $\alpha<\infty$ and $\cF=\vcyc$. Hence, $\vcyc\heqasdim \Z = 0$, while $\asdim \Z = 1$.
\end{longEx}

Apparently, $\eqasdim$ is a more subtle (or at least less understood) notion than $\asdim$ -- for several years the only class of groups that were known to be of finite equivariant asymptotic dimension with a reasonable $\bX$ was the family of hyperbolic groups \cite{coversForHyperbolic}, and the fact that they also have finite asymptotic dimension is classical and has a short proof \cite{hyperbolicAsdim}. (Now, we have similar results for relatively hyperbolic groups \cite{coarseFlowSpaces} and mapping class groups \cite{BartelsBestvina}.) On the other hand, we have no examples of finite-$\asdim$ groups which are known to have infinite equivariant asymptotic dimension if we put some restrictions on $\bX$.

The difficulty with {proving finiteness of} $\eqasdim$ arise{s} (see the Remarks subsection) already in the case of the simplest non-hyperbolic group, namely $\Z^2$, which can be immediately proven to be of asymptotic dimension~$2$.

It turns out that Example \ref{CyclicToZero} can be generalised to give a complete characterisation of groups with vanishing equivariant asymptotic dimension.

\begin{thm}\label{thm:zero-dim} Equivariant asymptotic dimension of $G$ vanishes if and only if $\cF$ contains a finite-index subgroup of $G$.
\end{thm}

\begin{proof}
For the ``if'' part, let $H\in \cF$ be a finite-index subgroup of $G$. Let $\bX$ be the quotient $G/H$ and $\U = \{G\times \{x\} \st x\in \bX \}$. It is a $G$-invariant, disjoint and open covering with respect to the discrete topology on $\bX$, and stabilisers of elements of $\U$ are conjugates of $H$, hence they belong to $\cF$.

For the converse, assume that there is an $\cF$-cover $\U$ of $G\times \bX$ of dimension $0$ (that is, disjoint) and of $G$-Lebesgue number $\alpha \geq 1$. Take $U\in \U$ and $(g,x)\in U$. Then there exists $U'\in \U$ such that $\oB(g,\alpha)\times\{x\} \subseteq U'$ -- but then $U\cap U'\neq \emptyset$, so $U=U'$. Thus, we showed that $(g,x)\in U$ implies $\oB(g,\alpha)\times\{x\} \subseteq U$ -- hence $G\times \{x\} \subseteq U$ and we conclude that $U = G\times U_\bX$ for an open set $U_\bX\subseteq \bX$.

Consider now the sum $W = \bigcup GU_\bX$ of the orbit $GU_\bX$. We claim that $W$ is closed. Indeed, for $y\in \bX\setminus W$, there is $U'=G\times U'_\bX \in \U$ such that $y\in U'_\bX$, and disjointness of $\U$ implies $U'_\bX\cap W = \emptyset$.

So $W$ is a compact subset of $\bX$ covered by the disjoint family $GU_\bX$, meaning that the family must be finite. Thus, the orbit of $U_\bX$ is finite and the same is true for the orbit of $U$. Summing up, the stabiliser of $U$ belongs to $\cF$ and is of finite index in $G$.
\end{proof}

\begin{cor} $\vcyc\heqasdim G = 0$ if and only if $G$ is virtually cyclic.
\end{cor}

\begin{longEx} In particular, $\eqasdim$ is not a function of the asymptotic dimension of a group and/or the topological dimension of its boundary, as $\asdim \Z = \asdim \mathbb F_n = 1$ and $\dim \partial \Z = \dim \partial \mathbb F_n = 0,$ but for $n>1$: $$\vcyc\heqasdim \Z = 0 < \vcyc\heqasdim \F_n.$$ 
\end{longEx}

Moreover, equivariant asymptotic dimension does \emph{not} satisfy a logarithmic inequality holding for other notions of dimension ($\dim G\times H \leq \dim G + \dim H$), as $\eqasdim \Z^n > 0 = \eqasdim \Z$. In fact, it seems to be an open problem whether the product of groups of finite $\eqasdim$ has finite $\eqasdim$.

\subsubsection*{Remarks}\label{remarks:abelian}

Let us now consider the following situation. {Assume that $G$ acts geometrically on $(X,d)$ for a suitable proper and geodesic metric $d$. Then, any orbit} map $G\owns g\overset{j}{\mapsto} gx_0 \in X$ is a quasi-isometry{. Assume further that} if a sequence $(x_n)$ of points of $X$ converges to $x\in\dX$ and $(y_n)\in X$ is asymptotic to $(x_n)$, then $(y_n)$ also converges to $x$:
\begin{equation}\label{asymptoticity}
\sup_n d(x_n, y_n)<\infty \implies \lim_n y_n = x
\end{equation}
(which holds in particular for the Gromov boundary of a hyperbolic space).
Then, any finitely generated abelian subgroup $H\leq G$ has to belong to family $\cF$.

Indeed, let $\{z_1,\ldots z_k \}$ be a generating set of $H$, and let a sequence $(h_n)\in H$ and a point $x_0\in X$ be such that $x_n = h_nx_0$ converges to some $x\in \dX$. Then for any $i\in\{1,\ldots,k\}$ and a sequence $y_n= h_nz_ix_0$ we have $\sup_n d(x_n, y_n)<\infty$ (as $d_G(h_n, h_nz_i) = d_G(1, z_i)$ and $j$ is a quasi-isometry), and thus $z_i x = \lim z_i h_n x_0 = \lim h_n z_i x_0 = x$; i.e., $H$ stabilises $x$.

But finitely generated subgroups of isotropy groups of $\bX$ belong to $\cF$:
let $\alpha$ be large enough for $\oB(1,\alpha)$ to contain $\{z_1,\ldots z_k\}$ and let $U\in\U(\alpha)$ contain $\oB(1,\alpha) \times \{x\}$. Then $z_i \left(\oB(1,\alpha) \times \{x\}\right) = \oB(z_i,\alpha) \times \{x\}$ intersects nontrivially with $\oB(1,\alpha) \times \{x\}$ and thus $z_iU \cap U\neq \emptyset$, so $z_i$ must stabilise $U$ and thus $H$ is a subgroup of the stabiliser of $U$, which belongs to $\cF$ by the definition of $\feqasdim$-coverings.
This reasoning also shows that space $\bX$ in the definition of $\eqasdim$ is necessary; i.e., there are no $\alpha$-$\eqasdim$-coverings of $G = G\times \{*\}$ (unless $\alpha<1$ or $G\in\cF$).

The above suggests that commutativity (or existence of large abelian subgroups) may be an obstacle for (proving) finiteness of $\eqasdim$. Such a proof (if we assume $X\simeq G$) would require a compactification violating very natural condition \eqref{asymptoticity}. The condition holds 
for compactifications of CAT(0) groups used in \cite{flowForCAT0}, and thus {Bartels and L\"uck} used suitable subspaces of the compactification and showed transfer reducibility (not finiteness of $\eqasdim$).

However, if we already have a \emph{free} action with $G$ nilpotent and $\bX$ metrisable, we can use 
the following estimate
from
\cite{SWZ} (the paper uses the name \emph{amenability dimension}):
$$1 + \mathcal T\heqasdim G\times \bX\leq 3^{\ell(G)}\cdot (1+\dim \bX),$$
where $\mathcal T$ is the singleton family of the trivial subgroup of $G$, $\ell(G)$ is the Hirsch length and $\dim \bX$ is the Lebesgue covering dimension of $\bX$.

%
%

\section{Characterisations of equivariant asymptotic dimension}
\label{sec:characterisations}

The aim of this section is to provide a number of equivalent characterisations of equivariant asymptotic dimension. We will state our theorem in a generality broader than in the previous section to handle the notion of transfer reducible groups that are defined in terms of homotopy group actions.

%
%

\subsection{Homotopy actions}

Consider the map $\rho(g,x)=(g,g^{-1}x)$. It is a $G$-equivariant homeomorphism from $G\times \bX$ with the diagonal action onto $G\times \bX$ with the action on the first coordinate by left multiplication (we will call this the action by translations). The condition $\oB(g,\alpha)\times\{x\}\subseteq U$, is equivalent to $DB^{\alpha}(\rho(g,x)) \defeq \rho(\oB(g,\alpha)\times\{x\}) \subseteq \rho(U)$, where $DB$ stands for ``diagonal (closed) ball'' and can be described as follows:
$$DB^\alpha(g,x) = \{(gh, h^{-1}x) \st |h|\leq\alpha \}.$$
Hence, we could define equivariant asymptotic dimension using diagonal balls and the action by translations on $G\times \bX$.

Note that in this reformulation the action on $\bX$ is used only to define $DB$s. Thus, {if we are} able to generalise the definition of $DB$, we could ease the requirement that $G$ acts on $\bX$.

\begin{defi}[{\cite{flowForCAT0}*{Definition 0.1}}]\label{def:homotopyAction} Let $\bX$ be a compact metric space, and $S$ a finite and symmetric subset of a group $G$ containing the neutral element $1$.

\begin{enumerate}[label={(\roman*)}]
\item
A \emph{homotopy $S$-action $(\varphi,H)$ on $\bX$} consists of continuous maps $\varphi_g \colon \bX \to \bX$ for $g \in S$ and homotopies $H_{g,h}^t \colon \bX \to \bX$ for $g,h \in S$ with $gh \in S$ and $t\in [0,1]$ such that $H_{g,h}^0 = \varphi_g \circ \varphi_h$ and $H_{g,h}^1 = \varphi_{gh}$. Moreover, we require $H_{1,1}^t = \varphi_1 = \id_\bX$ for all $t \in [0,1]$;

\item 
Let $(\varphi,H)$ be a homotopy $S$-action on $\bX$. For $g \in S$ let $F_g=F_g(\varphi,H)$ be the set of all maps $H_{r,s}^t$, where $rs = g$.

For $(g,x) \in G \times \bX$, let $DB^1_{\varphi,H}(g,x)$ be the subset of $G \times \bX$ consisting of all $(gs,y)\in G\times X$ such that $y=f_{s^{-1}}(x)$ or $x=f_{s}(y)$,
where $s\in S$, $f_{s^{-1}}\in F_{s^{-1}}$ and $f_{s}\in F_{s}$.
For $A\subseteq G\times \bX$ and $n \in \N$ we put $DB^1_{\varphi,H}(A)=\bigcup_{(g,x)\in A} DB^1_{\varphi,H}(g,x)$ and inductively $DB^{n+1}_{\varphi,H}(A) = DB^1_{\varphi,H}\left(DB^n_{\varphi,H}(A)\right)$

\item Let $(\varphi,H)$ be a homotopy $S$-action on $\bX$, $\U$ be an open cover of $G \times \bX$, and $n\in \N$. We say that $\U$ is \emph{$n$-long with respect to $(\varphi,H)$} if for every $(g,x) \in G \times \bX$ there is $U \in \U$ containing $DB^{n}_{\varphi,H}(g,x)$.
\end{enumerate}
\end{defi}

Note that -- due to the fact that $1\in S$ and $\id_\bX\in F_1(\varphi,H)$ -- we have $A\subseteq DB^1_{\varphi,H}(A)$, so $n\mapsto DB^n_{\varphi,H}(A)$ is ``increasing''.

\begin{lem}\label{lem:controllingDB} Let $A$ be a compact subset of $G\times\bX$. Then $DB^{n}_{\varphi,H}(A)$ is also compact. Moreover, for every $\eps>0$ there exists $\delta>0$ such that $DB^n_{\varphi,H}(B(A,\delta)) \subseteq B\left(DB^n_{\varphi,H}(A),\ \eps\right)$, where the neighbourhoods are taken with respect to the product metric on $G\times\bX$.
\end{lem}
\begin{proof} Since $DB^n_{\varphi,H}$ is the $n$-th power of $DB^1_{\varphi,H}$ (viewed as a function from the power set of $G\times \bX$ to itself), it is enough to restrict to the case of $DB \defeq DB^1_{\varphi,H}$. Moreover, we can restrict to the case when $A=\{h\}\times Y$, for some closed $Y\subseteq \bX$.

Observe that $DB(\{h\}\times Y)$ is the union of two sets $I$ and $II$:
\begin{align*}
I = \bigcup_{g\in S} \bigcup_{\substack{r,s\in S \\ rs=g^{-1}}} \bigcup_{\substack{t\in[0,1]}} &\{hg\} \times H_{r,s}^t(Y),\\
II = \bigcup_{g\in S} \bigcup_{\substack{r,s\in S \\ \phantom{..}rs=g\phantom{..}}} \bigcup_{\substack{t\in[0,1]}} &\{hg\} \times {(H_{r,s}^t)}^{-1}(Y).
\end{align*}

Let us denote the map $(x,t)\mapsto H^t_{r,s}(x)$ by $H_{r,s}$. Instead of $\bigcup_{\substack{t\in[0,1]}} H_{r,s}^t(Y)$ one can write $H_{r,s}(Y\times [0,1])$ and similarly $\bigcup_{\substack{t\in[0,1]}} {(H_{r,s}^t)}^{-1}(Y) = \pi_\bX\left({H_{r,s}}^{-1}(Y)\right)$, where $\pi_\bX\colon\bX\times[0,1]\to\bX$ is the projection. Consequently:
\begin{align*}
I = \bigcup_{g\in S} \bigcup_{\substack{r,s\in S \\ rs=g^{-1}}} &\{hg\} \times H_{r,s}(Y\times[0,1]),\\
II = \bigcup_{g\in S} \bigcup_{\substack{r,s\in S \\ \phantom{..}rs=g\phantom{..}}} &\{hg\} \times \pi_\bX\left({H_{r,s}}^{-1}(Y)\right).
\end{align*}
The above sums are finite, so the obtained set is compact, because images and inverse images of compact sets are compact as long as all the spaces considered are compact.

Maps $H_{r,s}$ are uniformly continuous, so the ``moreover'' part is clear for $I$, and in order to obtain it for $II$ it suffices to prove the following (because $\pi_\bX$ is also uniformly continuous).

\begin{claim}  Let $H\colon Z\to Z'$ be a continuous map between compact metric spaces. For every compact subset $A\subseteq Z'$ and every $\eps>0$ there exists $\delta>0$ such that $H^{-1}(B(A,\delta)) \subseteq B\left(H^{-1}(A),\ \eps\right)$.
\end{claim}

Suppose the contrary, that for some $\eps>0$ there is a sequence of $z_n'$ approaching $A$ such that there are $z_n\in H^{-1}(z_n')$ at least $\eps$-distant from $H^{-1}(A)$. By passing to a subsequence, we can assume that $z_n$ converge to some $z_0\notin B(H^{-1}(A), \ \eps)$. However, by continuity, $H(z_0) \in A$, which yields a contradiction. 
\end{proof}

%
%

\subsection{The characterisations}

\begin{defi} Let $Y$ be a $G$-set. Its subset is called an \emph{almost $\cF$-subset} if its stabiliser belongs to $\cF$. An almost $\cF$-cover is a covering consisting of almost $\cF$-subsets and closed under the induced action of $G$.
\end{defi}

That is, what distinguishes an almost $\cF$-subset $U$ from an $\cF$-subset is that it may happen that $U\neq gU$, but still $U\cap gU\neq \emptyset$.

Conditions (\ref{cond:eqasdim}), (\ref{cond:disjointEqasdim}), and (\ref{cond:epsGMap}) below correspond to conditions (\ref{cond:asdim}), (\ref{cond:disjoint}), and (\ref{cond:epsToComplex}) in Theorem \ref{thm:asdimCharacterisations}. Condition (\ref{cond:almostEqasdim}), a version of (\ref{cond:eqasdim}), is introduced to relate to the ``almost'' versions of transfer reducibility present in the literature, \cite{GLnZ}. Similar to condition (\ref{cond:epsGMap}), condition (\ref{cond:amenableAction}) comes from \cite{onProofs}*{Theorem A} and resembles the definition of an amenable action.

Maps from condition (\ref{cond:epsGMap}) yield functors crucial for the proofs of the Farrell--Jones and Borel conjectures, compare \cite{FarrellJonesForHyperbolic}*{Section 4}, \cite{CAT0}*{Proposition 3.6 and Section 5} and \cite{Borel}*{Proposition 3.9 and Section 11}.

\begin{thm}\label{thm:characterisations}
Let $n\in\N$ and $S$ be a finite symmetric subset of $G$ containing the identity element $1$. Below, we require each $\bX$ to be a compact metrisable space and $(\varphi,H)$ to be a homotopy $S$-action of $G$ on $\bX$. The action on $G\times \bX$ considered below is given by $h(g,x) = (hg,x)$.

The following conditions $(\ref{cond:eqasdim}$--\,$\ref{cond:amenableAction})$ are equivalent and they imply condition $(\ref{cond:almostEqasdim})$. They are all equivalent if $\cF$ is virtually closed (e.g., $\cF=\vcyc$):

\begin{enumerate}
\setcounter{enumi}{-1}
    \item\label{cond:almostEqasdim} for every $m\in\N$ there is $(\bX,\varphi,H)$ and an $m$-long almost $\cF$-cover of $G\times \bX$ of dimension at most $n$;
    \item\label{cond:eqasdim} for every $m\in\N$ there is $(\bX,\varphi,H)$ and an $m$-long $\cF$-cover of $G\times \bX$ of dimension at most $n$;
    \item\label{cond:disjointEqasdim} for every $r\in\N$ there is $(\bX,\varphi,H)$ and disjoint $\cF$-families $(\U^i)$ for $0\leq i \leq n$ of open subsets of $G\times \bX$ such that $\bigcup_i\U^i$ is an $r$-long covering of $G\times \bX$;
    \item\label{cond:epsGMap} for every $\eps>0$ there is $(\bX,\varphi,H)$, an $\cF$-simplicial complex $K$ of dimension~$n$, and a $G$-equivariant continuous map $\phi\colon G\times \bX \to K$, which is ``diagonally $(G,\eps)$-Lipschitz'':
    $$\|\phi(g,x) - \phi(gs^{-1}, f_s(x)) \| \leq \eps\ \ \forall_{s\in S}\ \forall_{f_s \in F_s}\ \forall_{(g,x)\in G\times \bX};$$
    \item\label{cond:amenableAction} for every $\eps>0$ there is $(\bX,\varphi,H)$, an $\cF$-simplicial complex $K$ of dimension~$n$, and a continuous map $\psi\colon \bX \to K$, which is $\eps$-equivariant:
    $$\|\psi(f_s(x)) - s\psi(x) \|\leq \eps\ \ \forall_{s\in S}\ \forall_{f_s \in F_s}\ \forall_{x\in \bX}.$$
\end{enumerate}
\end{thm}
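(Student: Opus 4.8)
The plan is to run the same cycle of implications that establishes the classical Theorem~\ref{thm:asdimCharacterisations}, but phrased entirely in terms of antidiagonal balls so that it survives the passage to homotopy actions. Concretely I would prove (\ref{cond:eqasdim})~$\Rightarrow$~(\ref{cond:epsGMap})~$\Rightarrow$~(\ref{cond:disjointEqasdim})~$\Rightarrow$~(\ref{cond:eqasdim}), treat (\ref{cond:epsGMap})~$\Leftrightarrow$~(\ref{cond:amenableAction}) as a reformulation, and handle the outlier (\ref{cond:almostEqasdim}) separately. The key structural observation is that $G$ acts on $G\times\bX$ by translation on the first factor, so this action is free on the $G$-coordinate and $\{1\}\times\bX$ is a fundamental domain; any map fixed on that slice extends uniquely and equivariantly by $\phi(h,x)=h\cdot\phi(1,x)$. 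This makes equivariance essentially free and also gives (\ref{cond:epsGMap})~$\Leftrightarrow$~(\ref{cond:amenableAction}) at once: writing $\psi(x):=\phi(1,x)$, so that $\phi(g,x)=g\psi(x)$, and using that each $s\in S$ acts on $K\subseteq\ell_1(V(K))$ by a coordinate-permuting isometry, the antidiagonal Lipschitz inequality becomes exactly $\|\psi(f_s(x))-s\psi(x)\|\le\eps$. Two further steps are immediate and I would dispose of them first: (\ref{cond:eqasdim})~$\Rightarrow$~(\ref{cond:almostEqasdim}), since every $\cF$-cover is an almost $\cF$-cover; and (\ref{cond:disjointEqasdim})~$\Rightarrow$~(\ref{cond:eqasdim}), since the $G$-invariant union $\bigcup_i\V^i$ has each member sitting inside a single disjoint family (hence orbit-equal-or-disjoint, a genuine $\cF$-subset), meets each $\V^i$ in at most one set (so dimension $\le n$), and is $r$-long by hypothesis.

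For (\ref{cond:epsGMap})~$\Rightarrow$~(\ref{cond:disjointEqasdim}) I would pass to the barycentric subdivision $K'$ and, for each simplex $\sigma$ of $K$, take $U_\sigma$ to be the open star in $K'$ of the barycenter $b_\sigma$. For a fixed dimension $i$ the sets $\{U_\sigma\mid\dim\sigma=i\}$ are pairwise disjoint, because two barycenter-stars meet only when the simplices are comparable in the face poset and equal-dimensional comparable simplices coincide; over all $i$ they cover $K$. Setting $\V^i=\{\phi^{-1}(U_\sigma)\mid\dim\sigma=i\}$ yields disjoint families, $G$-invariant since $gU_\sigma=U_{g\sigma}$, each member having stabiliser equal to the setwise stabiliser of $\sigma$, which lies in $\cF$ because $K$ is an $\cF$-complex and $\cF$ is closed under subgroups. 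The barycenter-star cover of $K'$ has an $\ell_1$-Lebesgue number $c_n>0$ depending only on $n$; choosing $\eps<c_n/r$ and using that a single $ADB^1$-step moves $\phi$ by at most $\eps$ (this is precisely the antidiagonal Lipschitz inequality, matched to the two defining clauses of $ADB^1$), the image $\phi(ADB^r(g,x))$ has $\ell_1$-diameter $<c_n$ and lands in one $U_\sigma$; hence $\bigcup_i\V^i$ is $r$-long.

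The technical heart is (\ref{cond:eqasdim})~$\Rightarrow$~(\ref{cond:epsGMap}), the equivariant nerve construction. Let $\U$ be an $m$-long $\cF$-cover of dimension $\le n$ and let $K=N(\U)$ be its nerve, of dimension $\le n$, with vertex $[U]$ having stabiliser $G_U\in\cF$. Because $\U$ is a \emph{genuine} $\cF$-cover, two distinct members of one orbit meeting a common simplex would have to be equal, so the action has no inversions and the setwise stabiliser of a simplex $\{U_0,\dots,U_k\}$ equals $\bigcap_j G_{U_j}\in\cF$; thus $K$ is an $\cF$-complex. I would then build $\phi$ from an $ADB$-adapted partition of unity: set $\delta_U(p)=\sup\{t\ge 0\mid ADB^{\lceil t\rceil}_{\varphi,H}(p)\subseteq U\}$. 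Since $ADB^1$ is symmetric and $ADB^{k}\circ ADB^1=ADB^{k+1}$, the quantity $\delta_U$ changes by at most $1$ along any single $ADB^1$-step; by $m$-longness some $\delta_U(p)\ge m$, so after normalising $\lambda_U=\delta_U/\sum_{U'}\delta_{U'}$ and putting $\phi(g,x)=\sum_U\lambda_U(g,x)[U]$ the antidiagonal Lipschitz constant is $O(n/m)$, and one takes $m$ large relative to $1/\eps$; equivariance comes for free by defining $\phi$ on $\{1\}\times\bX$ and extending by translation. The delicate point, and what I expect to be the \textbf{main obstacle}, is continuity of $\phi$ in the $\bX$-direction: the integer depth $\delta_U$ jumps, so I would soften each threshold ``$ADB^{j}(p)\subseteq U$'' using the distance of the compact set $ADB^{j}(\{p\})$ to the complement of the open set $U$. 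That this distance varies continuously in $p$ is exactly what Lemma~\ref{lem:controllingADB} supplies, and checking that the softened depth retains both the $O(1/m)$ estimate and the covering property is the real work.

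Finally, the role of virtual closedness. Condition (\ref{cond:eqasdim})~$\Rightarrow$~(\ref{cond:almostEqasdim}) is trivial, and for the converse (\ref{cond:almostEqasdim})~$\Rightarrow$~(\ref{cond:eqasdim}) I would rerun the nerve construction above starting from an $m$-long \emph{almost} $\cF$-cover $\W$. Everything transfers except the verification that $N(\W)$ is an $\cF$-complex: an element of $G$ may now genuinely permute the members $W_0,\dots,W_k$ of a simplex, so the setwise stabiliser of that simplex is only a \emph{finite extension} of $\bigcap_j G_{W_j}$. This finite extension lies in $\cF$ precisely because $\cF$ is virtually closed, which upgrades $N(\W)$ to an $\cF$-complex and produces (\ref{cond:epsGMap}); composing with the already-established (\ref{cond:epsGMap})~$\Rightarrow$~(\ref{cond:disjointEqasdim})~$\Rightarrow$~(\ref{cond:eqasdim}) then closes the loop. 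This is the only place where virtual closedness is used, which is exactly why (\ref{cond:almostEqasdim}) joins the equivalence precisely under that hypothesis.
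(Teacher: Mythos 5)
Your outline reproduces the paper's architecture almost exactly: the same cycle $(\ref{cond:eqasdim})\implies(\ref{cond:epsGMap})\implies(\ref{cond:disjointEqasdim})\implies(\ref{cond:eqasdim})$, the same translation $(\ref{cond:epsGMap})\iff(\ref{cond:amenableAction})$ via $\phi(g,x)=g\psi(x)$, the same barycentric-subdivision argument, and the same use of virtual closedness only for simplex stabilisers coming from an almost $\cF$-cover. The problem is the step you yourself single out as the heart, $(\ref{cond:eqasdim})\implies(\ref{cond:epsGMap})$, where your argument has a genuine gap: continuity of $\phi$. Your integer depth $\delta_U(p)=\sup\{t \st ADB^{\lceil t\rceil}_{\varphi,H}(p)\subseteq U\}$ is precisely what Remark \ref{remark:characterisations} offers as the easy, \emph{discontinuous} substitute for condition (\ref{cond:epsGMap}); the entire content of this implication is upgrading it to a continuous map, and your proposed softening does not achieve that. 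You replace the threshold ``$ADB^{j}_{\varphi,H}(p)\subseteq U$'' by the distance from $ADB^{j}_{\varphi,H}(p)$ to the complement of $U$ and assert that Lemma \ref{lem:controllingADB} makes this distance continuous in $p$. It does not: the lemma gives only the one-sided inclusion $ADB^{j}_{\varphi,H}(B(A,\delta))\subseteq B\left(ADB^{j}_{\varphi,H}(A),\ \eps\right)$, i.e.\ outer semicontinuity of the set-valued map $p\mapsto ADB^{j}_{\varphi,H}(p)$, which yields only \emph{lower} semicontinuity of that distance. Upper semicontinuity genuinely fails for homotopy actions, because $ADB^{1}_{\varphi,H}(g,x)$ contains the preimage points $(gs,y)$ with $x=f_s(y)$, and the maps $f_s$ need not be injective. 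Concretely: take $\bX=[0,1]$, $S=\{1,s,s^{-1}\}$, $\varphi_s(t)=\max(0,2t-1)$, $\varphi_{s^{-1}}(t)=(1+t)/2$, constant homotopies (so $F_s=\{\varphi_s\}$, $F_{s^{-1}}=\{\varphi_{s^{-1}}\}$); then $ADB^{1}_{\varphi,H}(g,0)$ contains $\{gs\}\times[0,\tfrac{1}{2}]$, while for $\eps>0$ the set $ADB^{1}_{\varphi,H}(g,\eps)$ meets $\{gs\}\times\bX$ only in the single point $(gs,\tfrac{1+\eps}{2})$, so for $U$ omitting $(gs,0)$ the distance jumps from $0$ to at least $\tfrac{1}{2}$. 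Weights with such jumps produce a discontinuous $\phi$, and continuity cannot be waived: it is part of condition (\ref{cond:epsGMap}) (Remark \ref{remark:characterisations} makes this explicit by discussing when it may be dropped), and your own next step $(\ref{cond:epsGMap})\implies(\ref{cond:disjointEqasdim})$ needs the sets $\phi^{-1}(U_\sigma)$ to be open. The same gap then infects your treatment of (\ref{cond:almostEqasdim}), which reruns this construction.

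The repair is to thicken on the \emph{source} side rather than the target side, and this is exactly what the paper does. It interleaves $\delta$-neighbourhoods with antidiagonal steps (the sets $ADB^{k,\delta}_{\varphi,H}$), uses Lemma \ref{lem:controllingADB} together with compactness to find finitely many compacta $W_i$ and a single $\delta$ with $ADB^{m,\delta}_{\varphi,H}(W_i)\subseteq U_i$, and then builds a $G$-invariant path metric $d$ on $G\times\bX$ in which one antidiagonal step costs $1$ and movement inside $\bX$ costs $\Lambda\cdot d_\bX$; this metric induces the product topology, and the cover has $d$-Lebesgue number $m+1$. The function $l_U(p)=\min\left(m+1,\ \sup\{r \st B(p,r)\subseteq U\}\right)$ is then $1$-Lipschitz by the triangle inequality, so continuity of $\Phi=\sum_U l_U\cdot\one_U$ and of its normalisation is automatic, while the Lebesgue-number property gives $\|\Phi\|\geq m+1$ and the step estimate $|l_U(g,x)-l_U(gs^{-1},f_s(x))|\leq 1$ gives the $(G,\eps)$-Lipschitz bound. (In your language, the continuous quantity is a radius of the form $\sup\{\lambda \st ADB^{j,\lambda}_{\varphi,H}(p)\subseteq U\}$, where the thickenings are interleaved with the steps; the distance of $ADB^{j}_{\varphi,H}(p)$ to the complement is not continuous.) Until the continuity of $\phi$ is secured by such a device, the implication $(\ref{cond:eqasdim})\implies(\ref{cond:epsGMap})$ -- and with it the whole equivalence -- is not established by your argument.
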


\begin{longRem}\label{def:transferReducible} If a group $G$ satisfies the equivalent conditions ($1$--\,$4$) from the theorem for all finite symmetric subsets $S \subseteq G$ with some additional technical requirements on $\bX$, then $G$ is said to be \emph{transfer reducible} over $\cF$, \cite{flowForCAT0}.
\end{longRem}

The above theorem is stated in terms of \emph{existence} of homotopy actions, however we do not construct spaces $\bX$ and homotopy actions in the proof. Hence, all the equivalences stay true for a fixed $G$-action on a fixed $\bX$ as in the definition of equivariant asymptotic dimension for $G\times \bX$ (Definition \ref{def:eqasdim}).

It was pointed out by M. Bridson that condition (\ref{cond:amenableAction}) of Theorem \ref{thm:characterisations} is very similar to the concept of \emph{amenable action} \cite{onProofs}*{Remark 3.6}. As explained in \cite{coarseFlowSpaces}, this is -- on one hand -- more than an amenable action, where the target space is the whole unit sphere of $\ell_1(G)$, not just an $n$-dimensional complex in it. On the other hand, for $\eqasdim$, space $\ell_1(Y)$ can be build on any $G$-set $Y$ as long as its isotropy groups belong to $\cF$ and for amenable actions we have $Y=G$. In \cite{coarseFlowSpaces}, an action of $G$ on $\bX$ such that $\cF\heqasdim G\times \bX\leq N$ is called \emph{$N$-$\cF$-amenable}.

\begin{proof}[Proof of Theorem \ref{thm:characterisations}]
Implications $(\ref{cond:eqasdim})\implies(\ref{cond:almostEqasdim})$ and $(\ref{cond:disjointEqasdim})\implies(\ref{cond:eqasdim})$ are immediate.

$(\ref{cond:epsGMap})\iff(\ref{cond:amenableAction})$ was suggested in \cite{onProofs}*{Remark 3.7} and holds even for a fixed $\eps$. For the ``if'' part we put $\phi(g,x) = g\psi(x)$. Map $\phi$ is clearly $G$-equivariant and also satisfies the required condition:
$$
\|\phi(g,x) - \phi(gs^{-1},f_s(x))\| = \|g\psi(x) - gs^{-1}\psi(f_s(x))\| = \|s\psi(x) - \psi(f_s(x))\| \leq \eps.
$$
For the ``only if'' part we put $\psi(x)= \phi(1,x)$ and check:
$$\|s\psi(x)-\psi(f_s(x))\| = \|s\phi(1,x) - \phi(1,f_s(x))\| = \|\phi(s,x) - \phi(s s^{-1},f_s(x))\| \leq \eps.$$

$(\ref{cond:epsGMap}) \implies (\ref{cond:disjointEqasdim})$. We will replace $K$ from \eqref{cond:epsGMap} by its 
barycentric subdivision $SK$. The identity map $K\to SK$ is Lipschitz with the constant depending only on $n$. Each vertex of $SK$ corresponds to a subset (simplex) of vertices of $K$, vertices of the same cardinality are not adjacent, and the cardinality of a vertex is clearly preserved under the group action. Moreover, the stabiliser of a vertex in $SK$ is the stabiliser of a simplex in $K$, so it belongs to $\cF$ (in fact also simplex stabilisers belong to $\cF$).

To obtain $(\ref{cond:disjointEqasdim})$, we put $\eps = \frac{1}{(n+1)(r+1)}$ and let $\phi\colon G\times \bX\to SK$ be diagonally $(G,\eps)$-Lipschitz. We define $\U^i = \{\phi^{-1}(S_y) \st y\in V(SK),\ |y|=i+1\}$, where {$i\in\{0,\ldots,n\}$} and $S_y$ is the open star about $y$; that is, $S_y=\{p\in SK \st p(y)>0\}$ (recall that we view $SK$ as a subset of $\ell_1(V(SK))$). The fact that two vertices are non-adjacent is equivalent to disjointness of the respective stars; hence, different elements of $\U^i$ are disjoint. By $G$-equivariance of $\phi$ we get $g\phi^{-1}(S_y) = \phi^{-1}(S_{gy})$, so $\U^i$ is $G$-invariant and the stabiliser of $\phi^{-1}(S_y)$ is the stabiliser of $y$ and thus belongs to $\cF$.

Let now $(g,x)\in G\times \bX$ and $v_0$ be an element $v\in V(SK)$ maximising $\phi(g,x)(v)$. We have $\phi(g,x)(v_0)\geq \frac{1}{n+1}$. Thus, since $\phi$ is diagonally $(G,\eps)$-Lipschitz and $\eps = \frac{1}{(n+1)(r+1)}$, for any $(g',x')\in DB^r_{\varphi,H}(g,x)$ we have $\phi(g',x')(v_0)\geq \frac{1}{n+1} -  \frac{r}{(n+1)(r+1)}>0$. Therefore, $DB^r_{\varphi,H}(g,x)\subseteq \phi^{-1}(S_{v_0})\in \U^i$ for $i = |v_0| - 1$.

$(\ref{cond:eqasdim})\implies (\ref{cond:epsGMap})$. This proof is based on techniques from \cite{Borel}*{Section 3}.

Let $m$ be an integer greater than $\frac{3(n+1)}{\eps}$ and $\U$ be an $m$-long $\cF$-cover of $G\times \bX$. From Lemma \ref{lem:controllingDB} it follows that $I(U)=\{(g,x) \st DB^m_{\varphi, H}(g,x) \subseteq U\}$ is open provided that $U$ is. The family $\mathcal{I}=\{I(U) \st U\in \U\}$ is $G$-invariant (as $hDB^m_{\varphi, H}(g,x)=DB^m_{\varphi, H}(hg,x)$), which means that it looks the same when restricted to $\{g\}\times \bX$ for any $g$. There is a finite cover of $\{1\}\times \bX$ by compact subsets $(W_i)_{i=1}^k$ such that each $W_i$ is contained in some $I(U_i)$ with $U_i\in \U$. Thus, $DB^m_{\varphi,H}(W_i)$ is contained in $U_i$. By compactness (see Lemma \ref{lem:controllingDB}) there exists $\eps_i$ such that $B\left(DB^m_{\varphi,H}(W_i),\ \eps_i\right)\subseteq U_i$.

For $\delta>0$ let us define $DB^{0,\delta}_{\varphi,H}(A) = B(A,\delta)$
 and inductively:
$$DB^{k+1,\delta}_{\varphi,H}(A) = B\left(DB^1_{\varphi,H}\left({DB^{k,\delta}_{\varphi,H}\left(A\right)}\right),\ \delta\right).$$
From Lemma \ref{lem:controllingDB} and induction, it follows that for each $i$ there is $\delta_i$ such that $DB^{m,\delta_i}_{\varphi,H}\left(W_i\right)\subseteq B\left(DB^m_{\varphi,H}(W_i),\ \eps_i\right)\subseteq U_i$. Let $\delta=\min_i \delta_i$ and $\Lambda = \frac{m}{\delta}$.

We will define a $G$-invariant metric $d$ on $G\times \bX$ such that $m$ will be a Lebesgue number of $\U$. Let 
\begin{equation*}
d_0((g,x),(g',x')) =
  \begin{cases}
  \Lambda\cdot d_\bX(x,x'), &\text{if}\ g=g'\text{,}\\
  \max(m,\  \Lambda\cdot \diam\bX), &\text{otherwise.}
  \end{cases}
\end{equation*}
For $y,z\in G\times \bX$ let $d(y,z)$ be equal to the infimum of finite sums $\sum_{i=1}^k\Delta_i((g_j,x_j)_j)$ over finite sequences $(g_j,x_j)_{j=0}^k$ such that $(g_0,x_0)=y$, $(g_k,x_k)=z$, where
\begin{equation*}
\Delta_i((g_j,x_j)_j) =
  \begin{cases}
  1, &\text{if }(g_{i},x_{i}) = (g_{i-1}s^{-1},f_s (x_{i-1}))\ \\
  1, &\text{if } (g_{i-1},x_{i-1}) = (g_{i}s^{-1},f_s (x_{i}))\\
  d_0((g_{i-1},x_{i-1}),(g_{i},x_{i})),
  \end{cases}
\end{equation*}
where $s\in S$ and $f_s\in F_s$
(note that the three cases are not mutually exclusive so we always choose the smallest value).
It is easy to notice that $d$ is symmetric and satisfies the triangle inequality. Moreover, $d_0(y,z)< 1 \iff d(y,z)< 1$ and then they are equal, hence they induce the same topology (the product topology).

If $d(y,z)< m$, there exists a sequence $(g_i,x_i)_{i=0}^k$ joining $y$ and $z$ such that $\sum \Delta_i<m$. In particular, each time $\Delta_i$ is equal to the distance $d_0$ between $(g_{i-1},x_{i-1})$ and $(g_i,x_i)$, this distance is smaller than $m$, meaning that 
\[d_\bX(x_{i-1},x_i) = \Lambda^{-1}\cdot d_0((g_{i-1},x_{i-1}),(g_i,x_i)) < \Lambda^{-1}\cdot m = \delta.\]
The other case happens at most $m$ times. Thus, $z$ belongs to $DB^{m,\delta}_{\varphi,H}(y)$. Since each $y\in G\times \bX$ belongs to some $W_i$ (or a translation of it) and for $W_i$ we have $DB^{m,\delta}_{\varphi,H}(W_i)\subseteq U_i$, we conclude that $m$ is a Lebesgue number of $\U$ with respect to~$d$.

Define $l_U(y) = \min\left(m,\ \sup \{r \st B_d(y,r) \subseteq U\}\right)$. It is clearly $1$-Lipschitz, in particular continuous. Moreover, $d((g,x),(gs^{-1}, f_s(x)))\leq 1$ for $f_s\in F_s$, so we have $|l_U(g,x) - l_U(gs^{-1}, f_s(x))| \leq 1$. Furthermore, by $G$-invariance of $d$, we have $l_U(y)=l_{gU}(gy)$ for any $g\in G$.

We define $\Phi(g,x) = \sum_{U\owns x}l_U(g,x)\cdot\one_U \in \ell_1(\U)$ and $\phi(g,x) = \frac{\Phi(g,x)}{\|\Phi(g,x)\|}$.

From the fact that $l_U^{-1}(0,\infty) \subseteq U$ and that the dimension of $\U$ is at most $n+1$ we conclude that map $\phi$ acquires its values in an $n$-dimensional complex $K\subseteq \ell_1(\U)$. Moreover, since $hU\neq U$ implies $hU\cap U = \emptyset$, we get that $l_U^{-1}(0,\infty) \cap l_{hU}^{-1}(0,\infty) = \emptyset$, so we can assume that $U$ and $hU$ in $K$ are not adjacent. Hence, the stabiliser of a simplex stabilises it pointwise, so it is the intersection of stabilisers of its vertices (corresponding to elements of $\U$) and belongs to $\cF$.

We have to check if $\phi$ is diagonally $(G,\eps)$-Lipschitz. Let $(g,x)\in G \times \bX$, $s\in S$, and $f_s\in F_s$. Without loss of generality:
$$ m \leq\|\Phi(g,x) \| \leq \| \Phi(gs^{-1},f_s(x)) \| \leq \|\Phi(g,x) \| + n+1$$
(in the last inequality we use the fact that $\Phi(gs^{-1},f_s(x))$ has at most $n+1$ points in its support) thus we can write:
\begin{align*}
\big\|\phi(g,x) &- \phi(gs^{-1},f_s(x))\big\| = \left\|\frac{\Phi(g,x)}{\|\Phi(g,x) \|} - \frac{\Phi(gs^{-1},f_s(x))}{\|\Phi(gs^{-1},f_s(x))\|}\right\| 
\\
\leq& \left\|\frac{\Phi(g,x) - \Phi(gs^{-1},f_s(x))}{\|\Phi(g,x) \|}\right\| \\
&+ \left\|\Phi(gs^{-1},f_s(x))\left(\frac{1}{\|\Phi(g,x)\|} - \frac{1}{\|\Phi(gs^{-1},f_s(x))\|}\right)\right\| 
\\
\leq& \frac{2(n+1)}{m} + \left(\frac{\|\Phi(gs^{-1},f_s(x))\|}{\|\Phi(g,x)\|} - 1 \right)
\leq \frac{2(n+1)}{m} + \frac{n+1}{m} < \eps.
\end{align*}

$(\ref{cond:almostEqasdim})\implies (\ref{cond:epsGMap})$ can be proved in the same way as $(\ref{cond:eqasdim})\implies (\ref{cond:epsGMap})$, but we cannot guarantee that simplex stabiliser is a pointwise stabiliser. Simplex stabiliser permutes vertices of the simplex and the kernel of this action is the pointwise stabiliser. This kernel is a finite index subgroup of the stabiliser, hence -- if $\cF$ is virtually closed -- the stabiliser belongs to $\cF$.
\end{proof}

\begin{cor} For a virtually closed $\cF$, the notions \cite{GLnZ}  of groups transfer reducible over $\cF$ and almost transfer reducible over $\cF$ are equivalent.
\end{cor}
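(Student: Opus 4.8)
The plan is to read the corollary off directly from Theorem \ref{thm:characterisations}, applied separately to each finite symmetric subset $S\subseteq G$ containing $1$. Recall from Remark \ref{def:transferReducible} that $G$ is transfer reducible over $\cF$ precisely when, for every such $S$, condition (\ref{cond:eqasdim}) holds subject to the technical constraints on $\bX$ and $(\varphi,H)$ imposed in \cite{flowForCAT0}; being \emph{almost} transfer reducible is the identical statement with condition (\ref{cond:almostEqasdim}) in place of (\ref{cond:eqasdim})---that is, with $\cF$-covers relaxed to almost $\cF$-covers. So the whole question reduces to comparing conditions (\ref{cond:almostEqasdim}) and (\ref{cond:eqasdim}), $S$ by $S$.

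One implication is immediate: since every $\cF$-subset is in particular an almost $\cF$-subset, an $m$-long $\cF$-cover is automatically an $m$-long almost $\cF$-cover. Thus transfer reducibility trivially implies almost transfer reducibility---this is just the implication $(\ref{cond:eqasdim})\implies(\ref{cond:almostEqasdim})$ noted at the start of the proof of the theorem, and it holds for any $\cF$. For the converse I would fix $S$ and invoke the equivalence $(\ref{cond:almostEqasdim})\iff(\ref{cond:eqasdim})$, which is exactly the part of Theorem \ref{thm:characterisations} that requires $\cF$ to be virtually closed; concretely, the chain $(\ref{cond:almostEqasdim})\implies(\ref{cond:epsGMap})\implies(\ref{cond:disjointEqasdim})\implies(\ref{cond:eqasdim})$ upgrades an almost $\cF$-cover to a genuine $\cF$-cover of the same dimension.

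The one point that genuinely needs care is that transfer reducibility carries technical hypotheses on $\bX$ and on the homotopy action, and these must survive the passage from (\ref{cond:almostEqasdim}) to (\ref{cond:eqasdim}). Here I would lean on the remark preceding Theorem \ref{thm:characterisations}: the proofs of all the equivalences never construct or modify $\bX$ or $(\varphi,H)$, they only rebuild the covering on the \emph{fixed} data $(\bX,\varphi,H)$. Consequently, if the almost $\cF$-cover witnessing (\ref{cond:almostEqasdim}) for a given $S$ lives on a space satisfying the required technical conditions, then the $\cF$-cover produced by the theorem lives on the very same space and inherits those conditions verbatim. Running this for every finite symmetric $S\ni 1$ then yields transfer reducibility, and together with the trivial direction this establishes the equivalence of the two notions. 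The main (and only real) obstacle is precisely this bookkeeping check that the auxiliary data is untouched; once it is settled, the corollary is a formal consequence of the theorem.
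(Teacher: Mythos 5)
Your proposal is correct and is exactly the argument the paper intends: the corollary is stated without proof as an immediate consequence of Theorem \ref{thm:characterisations}, namely the trivial implication $(\ref{cond:eqasdim})\Rightarrow(\ref{cond:almostEqasdim})$ together with the chain $(\ref{cond:almostEqasdim})\Rightarrow(\ref{cond:epsGMap})\Rightarrow(\ref{cond:disjointEqasdim})\Rightarrow(\ref{cond:eqasdim})$ available when $\cF$ is virtually closed, applied for each finite symmetric $S\ni 1$. Your bookkeeping point about the auxiliary data is also the paper's own (it is precisely the remark preceding the theorem that the proofs never construct or alter $\bX$ or $(\varphi,H)$), so nothing is missing.
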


The above theorem is formulated for a particular definition of a homotopy action, but should hold for all similar definitions such as \cite{CAT0}*{Definition 2.1}.

\begin{longRem}\label{remark:characterisations} To show $(\ref{cond:epsGMap}) \implies (\ref{cond:eqasdim})$ directly we do not need the continuity of $\phi$. It is enough to assume only that inverse images of stars are open and vertices in the same orbit are not adjacent, and put $\U = \{\phi^{-1}(S_y) \st y\in V(K)\}$.

Conversely, to obtain such a version of (\ref{cond:epsGMap}) from (\ref{cond:eqasdim}) it suffices to define $l_U(g,x)$ as $\max \left\{r\in \{1,\ldots,m\} \st DB^r_{\varphi,H}(g,x) \subseteq U\right\}$.

Note that the implication $(\ref{cond:eqasdim})\implies (\ref{cond:epsGMap})$ (and similar $(\ref{cond:almostEqasdim})\implies (\ref{cond:epsGMap})$) was the only step where we used the metrisability of $\bX$. In Subsection \ref{NoMetric}, we show how to avoid this requirement if we deal with genuine group actions. The analogue of condition (\ref{cond:multiplicity}) from Theorem \ref{thm:asdimCharacterisations} is also provided.
\end{longRem}

\subsection{Theorem \ref{thm:characterisations} without metrisability}\label{NoMetric}

In this subsection we restrict our attention to non-homotopy actions but allow non-metrisable spaces $\bX$.

We already noticed (Remark \ref{remark:characterisations}) that the only part of the proof of Theorem~\ref{thm:characterisations} utilising metrisability was the implication $(\ref{cond:eqasdim})\implies (\ref{cond:epsGMap})$, more precisely, the definition of $l_U$. In Subsection \ref{S:l_UWithoutMetric}, we propose a definition of function $l_U$ in the compact Hausdorff case, which enables proving $(\ref{cond:eqasdim})\implies (\ref{cond:epsGMap})$ in the non-metric setting. Subsection \ref{S:d-disjointness} provides more conditions characterising equivariant asymptotic dimension.
Again, metrisability is not needed for the equivalences to hold.

For the sake of the subsequent reasoning, we will overload our
notation: for $U\subseteq G\times\bX$ and $\alpha>0$ by $\oB(U,\alpha)$ we will denote the set 
$$\oB(U,\alpha) = \bigcup_{(g,x)\in U} \oB(g,\alpha)\times \{x\} = \{(h,x) \st \oB(h,\alpha)\times \{x\} \cap U \neq \emptyset \},$$
and similarly (for $-A$ meaning the complement of $A$):
$$\oB(U,-\alpha) = \{(h,x) \st \oB(h,\alpha)\times \{x\} \subseteq U \} = -\oB(-U,\alpha).$$

Observe that not only $U\mapsto \oB(U,\alpha)$ but also $U\mapsto \oB(U,-\alpha)$ preserves open sets. Indeed, $(h,x)\in \oB(U,-\alpha)$ implies $\oB(h,\alpha)\times \{x\}\subseteq U$. As $U$ is open and $\oB(h,\alpha)$ is finite, $\oB(h,\alpha)\times W \subseteq U$ for some neighbourhood $W$ of $x$ -- and thus $\{h\} \times W \subseteq \oB(U,-\alpha)$.

\subsubsection{\texorpdfstring{Function ${\boldsymbol{l_U}}$ for a compact Hausdorff $\boldsymbol G$-space $\boldsymbol\bX$}{Function l\_U for a compact Hausdorff G-space cl(X)}}\label{S:l_UWithoutMetric}

Let $k > \frac{3(n+1)^2}{\eps}$ and $\U$ be a $k$-$\eqasdim$-covering of $G\times \bX$. Let $\varphi^0_U\colon \bX \to [0,1]$ be a partition of unity subordinate to the restriction of the family $\mathcal I=\{I(U) \st U\in \U \}$ to $\{1\}\times \bX$, where $I(U) = \oB(U,-k)$. We put $\varphi_U = k \cdot \varphi^0_U$.

Let now:
$$l_U^0(g,x) = \max_{h\in G} \left( \varphi_{h^{-1}U}(h^{-1}x) - d_G(g,h) \right).$$
(Note that since $\varphi_U(\cdot)\in[0,k]$, the maximum is in fact taken over a finite set $d_G(g,h)\leq k$.) The positivity of $l_U^0$ for some $(g,x)$ means the existence of $h$ such that $d_G(g,h)<\varphi_{h^{-1}U}(h^{-1}x)$, in particular $\varphi_{h^{-1}U}(h^{-1}x) > 0$ and $d_G(g,h)\leq k$. Thus, point $(1,h^{-1}x)$ belongs to set $I(h^{-1}U)$, which is equivalent to $(h,x)\in I(U)$, which, in turn, implies $(g,x)\in U$ by the definition of $I(U)$. We conclude that $(l_U^0)^{-1}(0,\infty) \subseteq U$.

Clearly, $l_U^0$ also takes values in $[0,k]$ and for each $(g,x)$ there is $U$ such that $l_U^0(g,x) \geq \frac{k}{n+1}$ (it suffices to put $h=g$ and then select $U$). Additionally, it is $G$-invariant:
\begin{multline*}
l_U^0(g,x) = \max_{h\in G}\left( \varphi_{h^{-1}U}(h^{-1}x) - d_G(g,h)\right) =
\\
\max_{h\in G}\left( \varphi_{(jh)^{-1}jU}((jh)^{-1}jx) - d_G(jg,jh)\right) = l_{jU}(jg,jx),
\end{multline*}
where $j\in G$.
Furthermore, it is $1$-Lipschitz with respect to the $G$-coordinate. Indeed, let $s$ be a generator of $G$:
\begin{multline*}
l_U^0(g,x) = 
\max_{h\in G} \left( \varphi_{h^{-1}U}(h^{-1}x) - d_G(g,h)\right) \leq
\\
\max_{h\in G} \left( \varphi_{h^{-1}U}(h^{-1}x) - d_G(gs,h) + 1 \right) = l_U^0(gs,x) + 1.
\end{multline*}
Hence, $l_U(g,x) = l_U^0(g,gx)$ is diagonally $(G,1)$-Lipschitz.

This time, the construction of $l_U^0$ is based on a partition of unity (locally finite), so the continuity of $\phi$ (defined as in the proof of Theorem \ref{thm:characterisations}) is automatic, whereas in Theorem \ref{thm:characterisations} it followed from the Lipschitz property of $\Phi$. The proof of diagonal $(G,\eps)$-Lipschitz property of $\phi$ is analogous.

\subsubsection{\texorpdfstring{$\boldsymbol{d}$-disjointness and $\boldsymbol{r}$-multiplicity}{d-disjointness and r-multiplicity}}\label{S:d-disjointness}

In Theorem \ref{thm:asdimCharacterisations}, there is condition (\ref{cond:multiplicity}) {characterising asymptotic dimension by the existence, for arbitrary $d<\infty$, of a uniformly bounded cover with $d$-multiplicity at most $n+1$. This condition} has no counterpart in Theorem \ref{thm:characterisations} {characterising transfer reducibility}. Moreover condition (\ref{cond:disjoint}) in \ref{thm:asdimCharacterisations} is formulated in terms of $r$-disjoint families, while in \ref{thm:characterisations} (\ref{cond:disjointEqasdim}) we have disjoint families forming a covering with a $G$-Lebesgue number equal to $r$. This lack of analogy is due to the fact that it is not clear how to preserve openness while enlarging sets in order to force large $G$-Lebesgue numbers in the case of homotopy actions.

We fix it in the following proposition. Condition (\ref{cond:disjoint}) of \ref{thm:asdimCharacterisations} has its analogue in condition (\ref{equivariantMultiplicityC}) below and condition (\ref{cond:multiplicity}) of \ref{thm:asdimCharacterisations} is reflected by (\ref{equivariantMultiplicityB}).

We say that a covering of $G\times \bX$ has $(G,d)$-multiplicity $n$ if each set of the form $\oB(g,d) \times \{x\}$ intersects at most $n$ elements of the covering. A family of subsets of $G\times \bX$ is $(G,r)$-disjoint if for any two of its elements $U\neq U'$ we have $\oB(U,r)\cap U'=\emptyset$.

Note that in this subsection we return to the conventions of Section \ref{s:first}, where the action is diagonal, but the notions of $G$-multiplicity, $G$-disjointness etc.\ involve only the first coordinate (formally, the map $\rho(g,x)=(g,g^{-1}x)$ intertwines the two conventions).

\begin{prop}\label{prop:equivariantMultiplicity} Let $\bX$ be a compact Hausdorff space. Conditions $(\ref{equivariantMultiplicityA})$ and $(\ref{equivariantMultiplicityC})$ are equivalent and imply condition $(\ref{equivariantMultiplicityB})$. They are all equivalent if $\cF$ is virtually closed.
\begin{enumerate}
\item\label{equivariantMultiplicityA} $\feqasdim G\times\bX \leq n$;
\item\label{equivariantMultiplicityC} for each $r<\infty$ there exist $(r,G)$-disjoint $\cF$-families $(\mathcal I^i)$ for $0\leq i\leq n$ such that $\bigcup_i \mathcal I^i$ is an open cover of $G\times\bX$;
\item\label{equivariantMultiplicityB} for each $d<\infty$ there exists an open $\cF$-cover $\mathcal I$ of $G\times\bX$ with $(G,d)$-multiplicity at most $n+1$.
\end{enumerate}
\end{prop}
\begin{proof} 
For $(\ref{equivariantMultiplicityA})\implies(\ref{equivariantMultiplicityB})$, we put $\mathcal I = \{\oB(U,-d) \st U\in \U\}$ (we remove the empty set if it appears) for a $d\heqasdim$ family $\U$. Elements of $\mathcal I$ are subsets of elements of $\U$, so the stabilisers may only be smaller (since the definition is equivariant, they are equal).

Let us check the $G$-multiplicity. Consider any $G$-$d$-ball: $\oB(g,d)\times\{x\}$. If it intersects $\oB(U,-d)$ at point $(h,x)$, then, by symmetry of the metric, set $\oB(h,d) \times\{x\}$ contains point $(g,x)$. But we have the inclusion $\oB(h,d) \times\{x\} \subseteq U$ following from the definition of $\oB(U,-d)$, meaning that also set $U$ contains point $(g,x)$. Therefore, the number of sets $\oB(U,-d)$ intersecting $\oB(g,d)\times\{x\}$ does not exceed the number of sets $U$ containing point $(g,x)$, which is bounded by $n+1$.

For $(\ref{equivariantMultiplicityB})\implies(\ref{equivariantMultiplicityA})$ we take $\mathcal I$ for $d=\alpha$ and $\U = \{\oB(V,\alpha) \st V\in\mathcal I \}$. It is clearly $G$-invariant, open and have a $G$-Lebesgue number equal to $\alpha$.

Set $\oB(V,\alpha)$ contains point $(h,x)$ if and only if $\oB(h,\alpha)\times\{x\}$ intersects $V$. Thus, the multiplicity of $\U$ is bounded by the $(G,\alpha)$-multiplicity of $\mathcal I$, so we obtain $\dim\U\leq n$.

We have the equality $g\oB(V,\alpha) = \oB(gV,\alpha)$, so for all elements $g\in G$ such that $g\oB(V,\alpha) = \oB(V,\alpha)$ and for all $(h,x)\in \oB(V,\alpha)$ the set $\oB(h,\alpha)\times\{x\}$ intersects all translates $gV$. Hence, the number of such $gV$ is at most $n+1$ and thus the stabiliser of $\oB(V,\alpha)$ maps into the symmetric group $S(n+1)$ and the kernel is the intersection of stabilisers of sets $gV$. Consequently, the stabiliser of $\oB(V,\alpha)$ has a finite index subgroup from $\cF$.

Hence, if $\cF$ is virtually closed, covering $\U$ satisfies all the conditions for an $\alpha$-$\cF\heqasdim$ covering apart from the fact we do not know whether distinct sets from one orbit are disjoint; it is an almost $\cF$-covering. Theorem \ref{thm:characterisations} (see condition (\ref{cond:almostEqasdim})) in its non-metrisable version shows that for virtually closed $\cF$ it suffices.

For $(\ref{equivariantMultiplicityA})\implies(\ref{equivariantMultiplicityC})$ it is enough to take families $\U^i$ from condition (\ref{cond:disjointEqasdim}) of Theorem \ref{thm:characterisations} and set $\mathcal I^i = \{\oB(U,-d) \st U\in \U^i \}$ as the desired family (we remove empty sets if they appear). Conversely, if families $(\mathcal I^i)$ are $(G,2r+1)$-disjoint, then families $\U^i$ defined by $\U^i=\{\oB(I,r) \st I\in\mathcal I^i\}$ satisfy condition (\ref{cond:disjointEqasdim}) of \ref{thm:characterisations}.
\end{proof}

%
%

\begin{appendices}

\section{Equivariant topological dimension}\label{equivariantTopological}

Recall that \emph{the Lebesgue covering dimension} of a topological space $X$ is the smallest integer~$n$ such that any open covering has a refinement of dimension at most $n$. The number $n$ is sometimes called \emph{the} topological dimension and is denoted by $\dim X$.

If $X$ is an $F$-space for some group $F$, a natural question to ask is whether any $F$-covering has an $F$-refinement of dimension $n$. By an $F$-covering we mean an $\cF$-cover, where $\cF$ is the family of all subgroups of $F$. In other words: the covering is $F$-invariant and two distinct elements of an orbit are disjoint.

The question was 
 answered in positive in \cite{refinements} for a finite group $F$ acting on a metric space by isometries. This made the bound in Propositions 3.2, 3.3 of \cite{coversForHyperbolic} independent of the order of the group $F$. In \cites{coversForHyperbolic, GLnZ} a bound on the orders of finite subgroups $F$ of a group was needed. Due to the above improvement, a proof of the Farrell--Jones conjecture became possible in a situation where no such bound exists \cite{GLn}.

We will prove that the assumption that the group $F$ acting on the space is finite, is superfluous. It is enough to assume properness of the action. Moreover, our argument remains true for $\cF$-covers with arbitrary $\cF$.

%
%

\subsection[Dimension theory – auxiliaries]{Dimension theory -- auxiliaries}

Recall some definitions and facts from dimension theory after \cite{dimension}.

\begin{thm}[{\cite{dimension}*{9.2.16}}]\label{thm:openSurjection} Let $f\colon X\to Y$ be a continuous open surjection of metrisable spaces. If every fibre $f^{-1}(y)$ is finite, then $\dim X = \dim Y$.
\end{thm}

\begin{defi}[{\cite{dimension}*{5.1.1}}] \emph{The local dimension}, $\locdim X$, of a topological space $X$ is defined as follows. If $X$ is empty, then $\locdim X = -1$. Otherwise, $\locdim X$ is the smallest integer $n$ such that for every point $x\in X$ there is an open set $U\owns x$ such that $\dim \myol U\leq n$. If there is no such $n$, then $\locdim X = \infty$.
\end{defi}

\begin{thm}[{\cite{dimension}*{5.3.4}}] If $X$ is a metric space, then $\locdim X = \dim X$.
\end{thm}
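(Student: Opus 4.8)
The plan is to prove the two inequalities separately, with only the second requiring metrisability.

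The inequality $\locdim X \le \dim X$ is immediate and holds for any topological space: for every point $x \in X$ one may take the neighbourhood $U = X$ itself, whose closure is $X$ with $\dim \overline{U} = \dim X$; hence the smallest $n$ witnessing the local condition is at most $\dim X$. (If $\dim X = \infty$ there is nothing to prove.)

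For the reverse inequality $\dim X \le \locdim X$, I would assume $n := \locdim X < \infty$. By definition, every $x \in X$ has an open neighbourhood $U_x$ with $\dim \overline{U_x} \le n$, so $\{U_x\}_{x \in X}$ is an open cover of $X$. Here I would invoke paracompactness of metric spaces to pass to a locally finite open refinement $\{W_i\}$, where each $W_i$ is contained in some $U_{x(i)}$. Taking closures, $\{\overline{W_i}\}$ is a locally finite closed cover of $X$ (closures of a locally finite family remain locally finite); and since $\overline{W_i}$ is a closed subset of $\overline{U_{x(i)}}$, monotonicity of covering dimension on closed subspaces of normal spaces yields $\dim \overline{W_i} \le \dim \overline{U_{x(i)}} \le n$.

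The crux is then the \emph{locally finite closed sum theorem}: if a normal (here, metric) space is the union of a locally finite family of closed sets each of covering dimension at most $n$, then the whole space has dimension at most $n$. Applying it to $\{\overline{W_i}\}$ gives $\dim X \le n = \locdim X$, completing the argument. I expect this sum theorem to be the main obstacle. Unlike the countable closed sum theorem, which follows fairly directly from normality and a diagonal shrinking of covers, the locally finite version genuinely exploits the structure of metric (or paracompact normal) spaces and is usually established through the characterisation of $\dim$ by extensions of maps into spheres, or through a careful inductive refinement of open covers indexed by the given closed family. Everything else — the trivial inequality, paracompactness of metric spaces, and closed-subspace monotonicity — is standard and can simply be quoted.
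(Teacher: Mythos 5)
Your proof is correct, but there is nothing in the paper to measure it against: the paper does not prove this statement at all --- it imports it wholesale as Theorem 5.3.4 of Pears' book \cite{dimension} and uses it as a black box (e.g.\ in Corollary \ref{cor:dimOpenMonotonicity} and Proposition \ref{prop:dimAsSup}). What you have written is essentially the standard textbook proof of the quoted result. The easy inequality $\locdim X \le \dim X$ is handled correctly, and your reduction of $\dim X \le \locdim X$ to three quotable facts --- Stone's theorem that metric spaces are paracompact, monotonicity of $\dim$ on closed subspaces of normal spaces, and the locally finite closed sum theorem --- is exactly the argument found in the literature; note that it actually establishes the stronger statement that $\locdim = \dim$ for every paracompact Hausdorff space, which is the natural level of generality. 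One small correction to your closing remark: the locally finite closed sum theorem does not exploit metrisability or paracompactness of the ambient space --- it holds for all normal spaces, since local finiteness of the closed family is a hypothesis of the theorem rather than something the space must supply. Its usual proof well-orders the family and transfinitely extends maps into the sphere $S^n$, using that the union of any subfamily of a locally finite closed family is closed. Paracompactness enters only where you invoke it, namely to produce the locally finite open refinement in the first place.
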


\begin{cor}\label{cor:dimOpenMonotonicity} If $V$ is an open subset of a metric space $X$, then $\dim V\leq \dim X$.
\end{cor}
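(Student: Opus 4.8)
The plan is to deduce the statement from the two results just stated: the equality $\locdim = \dim$ for metric spaces and the definition of local dimension. Since $V$ is open in the metric space $X$, the subspace $V$ is itself metric, so the theorem gives $\dim V = \locdim V$ and $\dim X = \locdim X$; writing $n = \dim X$, it therefore suffices to show $\locdim V \le n$.

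First I would fix a point $x \in V$. Because $\locdim X \le n$, there is an open set $U \subseteq X$ with $x \in U$ and $\dim \overline{U} \le n$ (closures taken in $X$). The obstacle is that $\overline{U}$ need not lie in $V$, so $U$ does not directly witness the local dimension of $V$ at $x$. To remedy this I would shrink $U$ using the metric: since $V$ is open, choose $\delta > 0$ with $B(x,\delta) \subseteq V$ and set $W = U \cap B(x, \delta/2)$. Then $W$ is open, contains $x$, and its closure satisfies $\overline{W} \subseteq \overline{B(x,\delta/2)} \subseteq B(x,\delta) \subseteq V$, so the closure of $W$ computed in $V$ coincides with $\overline{W}$ and is in fact closed already in $X$.

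Now $\overline{W}$ is a closed subspace of $\overline{U}$, so by monotonicity of the covering dimension for closed subspaces of a normal space one gets $\dim \overline{W} \le \dim \overline{U} \le n$. As $x \in V$ was arbitrary and $W$ is a neighbourhood of $x$ in $V$ whose ($V$-)closure has dimension at most $n$, this shows $\locdim V \le n$, and hence $\dim V = \locdim V \le n = \dim X$.

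The only nontrivial input is the closed-subspace inequality $\dim \overline{W} \le \dim \overline{U}$, which is the main (though standard) ingredient. Indeed, the whole point of routing the argument through $\locdim$ is precisely to reduce the desired \emph{open}-subspace bound to this easier \emph{closed}-subspace monotonicity: the closures of small balls around points of the open set $V$ remain inside $V$ and are genuinely closed in $X$, which is exactly what allows the closed-subspace theorem to be applied.
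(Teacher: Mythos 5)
Your proof is correct and follows essentially the same route as the paper: both reduce to local dimension via $\locdim = \dim$ for metric spaces, shrink the given neighbourhood so that its closure lands inside $V$ (the paper takes a neighbourhood $V_x$ with $\myol{V_x}\subseteq V$, you take a ball $B(x,\delta/2)$ with $B(x,\delta)\subseteq V$ --- the same idea), and then invoke closed-subspace monotonicity of the covering dimension.
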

\begin{proof} 
By the above theorem, it is enough to prove $\locdim V\leq \dim X$. Consider $x\in V$. There is an open neighbourhood $V_x\owns x$ such that $\myol V_x\subseteq V$. Hence, since the dimension of a closed subset never exceeds the dimension of the space, we obtain:
$$\locdim V = \sup_{x\in V} \inf_{V \supseteq U\owns x} \dim \myol U \leq \sup_{x\in V} \dim \myol V_x \leq \dim X$$
(where $U$ is open and the closure of $U$ is taken in $V$).
\end{proof}

\begin{cor}\label{cor:locdimSimplification} In the case of metric spaces, there is no need for taking closures of neighbourhoods in the definition of local dimension (it is enough to consider open neighbourhoods and calculate their dimension).
\end{cor}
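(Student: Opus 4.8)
The plan is to show that the quantity $L = \sup_{x\in X}\inf_{U\owns x}\dim U$, where the infimum ranges over \emph{open} neighbourhoods $U$ of $x$, coincides with $\locdim X$. The only formal difference between $L$ and $\locdim X$ is that $L$ uses $\dim U$ in place of $\dim\myol U$, so the content of the corollary is exactly the equality $L=\locdim X$, which I would establish by two inequalities.

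For $L\leq\locdim X$, I would fix $x$ and take, by definition of local dimension, an open $U\owns x$ with $\dim\myol U\leq\locdim X$. The key observation is that $U$ is open in the metric subspace $\myol U$, so Corollary \ref{cor:dimOpenMonotonicity} applied \emph{inside} $\myol U$ (rather than inside $X$) yields $\dim U\leq\dim\myol U\leq\locdim X$. Hence $\inf_{U\owns x}\dim U\leq\locdim X$, and taking the supremum over $x$ gives $L\leq\locdim X$.

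For the reverse inequality $\locdim X\leq L$, I would fix $x$ and choose an open $U\owns x$ with $\dim U\leq L$. Using regularity of metric spaces, I would shrink $U$ to an open $V\owns x$ with $\myol V\subseteq U$. Then $\myol V$ is a closed subset of the metric space $U$ (its $X$-closure already lies in $U$, so it coincides with the closure of $V$ in $U$), whence the standard fact that the dimension of a closed subspace does not exceed that of the ambient space gives $\dim\myol V\leq\dim U\leq L$. Thus $x$ has an open neighbourhood whose closure has dimension at most $L$, and passing to the infimum over such neighbourhoods and then the supremum over $x$ yields $\locdim X\leq L$.

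The proof is short because it rests entirely on the two monotonicity properties already recorded: open-subspace monotonicity (Corollary \ref{cor:dimOpenMonotonicity}) and closed-subspace monotonicity. The only point requiring care—the minor obstacle—is bookkeeping about which ambient space each monotonicity statement is invoked in: for $L\leq\locdim X$ one applies open-monotonicity relative to $\myol U$, while for $\locdim X\leq L$ one must first use regularity to produce a neighbourhood whose $X$-closure sits inside $U$, so that closed-monotonicity can be applied relative to $U$.
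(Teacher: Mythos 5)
Your proof is correct and takes essentially the same route as the paper's: one inequality follows from Corollary \ref{cor:dimOpenMonotonicity} applied with $\myol{U}$ as the ambient metric space, and the other from regularity of metric spaces together with monotonicity of dimension on closed subspaces. The only cosmetic difference is that you prove both inequalities directly, whereas the paper phrases the second one as a contradiction with the minimality of $\dim \myol{U_x}$.
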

\begin{proof} Fix $x\in X$. It suffices to check the equality $\inf_{U\owns x}\dim \myol U = \inf_{U\owns x}\dim U$, where $U$ are open neighbourhoods of $x$. Let $U_x$ be an open neighbourhood of $x$ such that $\myol U_x$ has the smallest possible dimension. Then dimension of $U_x$ -- by Corollary \ref{cor:dimOpenMonotonicity} -- is no larger. On the other hand, if there is an open neighbourhood $V$ of $x$ such that $\dim V < \dim \myol U_x$, then there would be an open neighbourhood $W$ such that $\myol W \subseteq V$ and thus $\dim \myol W \leq \dim V < \dim \myol U_x$, contradicting the minimality of $\dim \myol U_x$.
\end{proof}

\begin{prop}\label{prop:dimAsSup} The dimension of a metric space $X$ is equal to the supremum of dimensions of its open subsets. It is enough to consider the supremum over any open cover of $X$.
\end{prop}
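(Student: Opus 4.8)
The plan is to prove the displayed equality as two inequalities, after noting that the first sentence of the proposition is subsumed by the open-cover statement: taking the cover $\{X\}$ (or simply observing that $X$ is itself an open subset of $X$) shows that the supremum of $\dim V$ over open $V\subseteq X$ already attains $\dim X$, so it suffices to establish the general claim. Fix therefore an arbitrary open cover $\mathcal C$ of $X$; I will show $\dim X = \sup_{V\in\mathcal C}\dim V$.

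For the inequality $\sup_{V\in\mathcal C}\dim V \leq \dim X$ I would invoke Corollary \ref{cor:dimOpenMonotonicity} directly: every $V\in\mathcal C$ is an open subset of $X$, so $\dim V\leq\dim X$, and passing to the supremum preserves the bound. This half needs nothing beyond monotonicity of dimension under passage to open subsets.

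For the reverse inequality $\dim X\leq \sup_{V\in\mathcal C}\dim V$ I would pass to local dimension. By Theorem 5.3.4 we have $\dim X=\locdim X$, and by Corollary \ref{cor:locdimSimplification} the local dimension of a metric space admits the closure-free description $\locdim X=\sup_{x\in X}\inf_{U\owns x}\dim U$, where $U$ ranges over \emph{open} neighbourhoods of $x$. The key step is then pointwise: for each $x\in X$, since $\mathcal C$ covers $X$, there is some $V\in\mathcal C$ with $x\in V$; because $V$ is an open neighbourhood of $x$, it is an admissible competitor in the infimum, whence $\inf_{U\owns x}\dim U\leq\dim V\leq\sup_{W\in\mathcal C}\dim W$. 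Taking the supremum over $x$ gives $\dim X=\sup_{x}\inf_{U\owns x}\dim U\leq\sup_{W\in\mathcal C}\dim W$, and combining the two inequalities yields equality.

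The argument is short because the earlier results do the heavy lifting; the single point deserving care is that one must use the closure-free formulation of Corollary \ref{cor:locdimSimplification}, so that a cover element $V$ serves \emph{as is} as an open neighbourhood in the infimum. Had the definition retained closures, one would instead have to bound $\dim\myol U$ for some $U$ with $\myol U\subseteq V$ and reapply monotonicity — still valid, but less transparent. I expect no genuine obstacle beyond assembling these pieces in the correct order.
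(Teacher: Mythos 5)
Your proof is correct and takes essentially the same route as the paper: the easy inequality via Corollary \ref{cor:dimOpenMonotonicity}, and the reverse inequality by combining $\dim X = \locdim X$ with the closure-free description of local dimension from Corollary \ref{cor:locdimSimplification}, using a cover element containing $x$ as a competitor in the pointwise infimum. The paper's displayed chain of inequalities is exactly your pointwise argument written in one line.
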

\begin{proof} Let $\U$ by any open covering of $X$. By Corollary \ref{cor:dimOpenMonotonicity}, the dimension of $X$ is no smaller than dimensions of its open subsets, thus $\dim X \geq \sup_{U\in \U} \dim U$. On the other hand, it equals the local dimension, which is equal -- by Corollary \ref{cor:locdimSimplification} -- to the supremum over points of infima over open neighbourhoods of their dimensions. But clearly, we have the inequalities:
\[\dim X =
\sup_x \inf_{U\owns x} \dim U \leq
\sup_x \inf_{\U\owns U\owns x} \dim U \leq
\sup_{x,\ \U\owns U\owns x} \dim U =
\sup_{U\in \U} \dim U.\qedhere\]
\end{proof}

%
%

\subsection{Equivariant refinements}

The following proposition strengthens \cite{refinements}*{Corollary 2.5}.

\begin{prop}\label{prop:quotientDimension} Let $(X,d)$ be a metric space with an isometric proper action of a group~$G$. Then $\dim X / G = \dim X$.
\end{prop}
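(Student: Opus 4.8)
The plan is to reduce the problem to the finite-fibre situation of Theorem~\ref{thm:openSurjection} by working locally and then gluing via Proposition~\ref{prop:dimAsSup}. The quotient map $q\colon Y\to H\backslash Y$ is continuous, surjective and open (for any open $O\subseteq Y$ the saturation $q^{-1}(q(O))=\bigcup_{h\in H}hO$ is open, being a union of homeomorphic images of $O$), but its fibres -- the orbits $Hy$ -- are typically infinite, so Theorem~\ref{thm:openSurjection} cannot be applied directly. Properness is exactly what repairs this locally.

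First I would use properness to choose, for each $y\in Y$, an open neighbourhood $W_y$ such that the return set $S_y=\{h\in H\st hW_y\cap W_y\neq\emptyset\}$ is finite (the same use of properness as in the proof of Proposition~\ref{prop:interiorCovering}). The key observation is then that the restriction $q|_{W_y}\colon W_y\to q(W_y)$ has finite fibres: for $w\in W_y$ the fibre over $q(w)$ is $W_y\cap Hw$, and if $hw\in W_y$ then $hw\in hW_y\cap W_y$, forcing $h\in S_y$; hence $W_y\cap Hw\subseteq\{hw\st h\in S_y\}$ is finite. Since $q$ is open, $q|_{W_y}$ is a continuous open surjection onto the open set $q(W_y)$ with finite fibres.

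Before invoking Theorem~\ref{thm:openSurjection} I must check that $H\backslash Y$ is metrisable; this is the main technical point, and it is where the isometry and properness hypotheses are genuinely used. I would equip $H\backslash Y$ with the quotient pseudometric $\bar d(Hy,Hz)=\inf_{h\in H}d(y,hz)$, which is well defined and symmetric because the action is isometric. Properness forces orbits to be closed: if $h_nz\to y$, pick $n_1$ with $h_{n_1}z\in W_y$; for every later $n$ with $h_nz\in W_y$ the element $h_nh_{n_1}^{-1}$ lies in $S_y$, so $h_nz$ takes only finitely many values and $y\in Hz$. Consequently $\bar d$ is a genuine metric, and since $q$ is open it induces the quotient topology. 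Thus $H\backslash Y$, and each subspace $q(W_y)$, are metrisable.

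With metrisability in hand, Theorem~\ref{thm:openSurjection} applied to $q|_{W_y}$ gives $\dim W_y=\dim q(W_y)$ for every $y$. The family $\{W_y\}_{y\in Y}$ is an open cover of $Y$ and $\{q(W_y)\}_{y\in Y}$ is an open cover of $H\backslash Y$ (as $q$ is surjective). Proposition~\ref{prop:dimAsSup} therefore yields
$$\dim(H\backslash Y)=\sup_{y\in Y}\dim q(W_y)=\sup_{y\in Y}\dim W_y=\dim Y,$$
which is the desired equality.
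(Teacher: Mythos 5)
Your proposal is correct and follows essentially the same route as the paper's proof: both establish metrisability of $H\backslash Y$ via the quotient pseudometric (using isometry for well-definedness and properness for positivity), both restrict $q$ to small neighbourhoods where properness makes the fibres finite, and both conclude by applying Theorem~\ref{thm:openSurjection} locally and gluing with Proposition~\ref{prop:dimAsSup}. The only cosmetic difference is that the paper works with balls $B(y,\eps)$ disjoint from the rest of the orbit (so fibres lie in stabiliser orbits), while you use neighbourhoods with finite return sets $S_y$ directly -- the same use of properness in slightly different packaging.
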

\begin{proof} We can fix a pseudometric on the quotient space: $$d'([x], [x']) = \inf_{g,g'\in G} d(gx, g'x').$$ The action is isometric, so it is equal to $ \inf_{g \in G} d(gx, x')$. If $[x]\neq [x']$, then -- by properness of the action -- there is no infinite sequence $g_nx$ {converging} to $x'$ and thus $d'([x], [x']) > 0$. Therefore, $X/G$ is a metric space (it is easy to check that the quotient topology and the metric topology agree).

Let $x \in X$. Similarly as above, there is $\eps=\eps(x)>0$ such that $B(x,2\eps)$ is disjoint with all the other elements of the orbit $Gx$. Consequently, $B(x,\eps)$ is disjoint with its translates and has a finite stabiliser $S$ (the one of $x$).

Denote by $f$ the restriction of the quotient map $q\colon X\to X/G$ to $B(x,\eps)$. For $x'\in B(x,\eps)$ and $y'=f(x')$, the fibre $f^{-1}(y')$ is equal to $S x'$ and thus finite. Clearly $f$ is an open surjection onto its (open) image, so Theorem \ref{thm:openSurjection} applies: $\dim f\left(B(x,\eps)\right) = \dim B(x,\eps)$.

Using the openness and the surjectivity again, we notice that the family
$$\left\{q\left(B\big(x,\eps(x)\big)\right) \st x\in X\right\}$$
is an open covering of $X / G$. With Proposition \ref{prop:dimAsSup} we conclude:
\[\dim X / G = \sup \dim q\left(B\left(x,\eps(x)\right)\right) = \sup \dim B(x,\eps(x))  = \dim X.\qedhere\]
\end{proof}

Finally, we can prove a version of \cite{refinements}*{Proposition 2.6}.

\begin{prop}\label{prop:refinements} Let $X$ be a metric space with an isometric proper action of a group~$G$ and $\dim X=n$. Any open $\cF$-cover $\U$ of $X$ has an open $\cF$-refinement $\W$ with dimension at most~$n$.
\end{prop}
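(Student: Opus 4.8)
The plan is to descend to the quotient, refine there, and pull the refinement back up, using Proposition \ref{prop:quotientDimension} to transport the dimension bound. Write $f\colon Y\to H\backslash Y$ for the quotient map. Since $H$ acts by homeomorphisms, $f$ is open and continuous, and by Proposition \ref{prop:quotientDimension} the space $H\backslash Y$ is metrisable with $\dim(H\backslash Y)=\dim Y=n$. First I would push $\U$ down: the images $\overline{\U}=\{f(U)\st U\in\U\}$ form an open cover of $H\backslash Y$ (open because $f$ is open, covering because $f$ is onto and $\U$ covers $Y$). As $\dim(H\backslash Y)=n$, this cover admits an open refinement $\overline{\W}$ with $\dim\overline{\W}\le n$ by the definition of covering dimension.

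Next I would lift $\overline{\W}$ back in an $\cF$-compatible way. For each $\overline{W}\in\overline{\W}$ choose $U=U(\overline{W})\in\U$ with $\overline{W}\subseteq f(U)$ (possible because $\overline{\W}$ refines $\overline{\U}$) and set $W_{\overline W}=f^{-1}(\overline{W})\cap U$. This set is open, and since it is the intersection of the $H$-invariant set $f^{-1}(\overline{W})$ with the $G_U$-invariant set $U$, it is $G_U$-invariant. I then take $\W=\{hW_{\overline W}\st h\in H,\ \overline{W}\in\overline{\W}\}$. By construction $\W$ is $H$-invariant and each $W_{\overline W}\subseteq U\in\U$, so $\W$ refines $\U$. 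Moreover, because the translates of the $\cF$-subset $U$ are either equal or disjoint, so are the translates of $W_{\overline W}$ (for $h\in G_U$ one gets $hW_{\overline W}=W_{\overline W}$, and otherwise $hW_{\overline W}\cap W_{\overline W}\subseteq hU\cap U=\emptyset$); hence each $W_{\overline W}$ is an $\cF$-subset and $\W$ is an open $\cF$-cover. That $\W$ covers $Y$ follows from $f^{-1}(\overline{W})=HW_{\overline W}$ together with the fact that $\overline{\W}$ covers $H\backslash Y$.

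The heart of the argument — and the step I expect to require the most care — is the dimension estimate $\dim\W\le\dim\overline{\W}$. The key observation is that for an $\cF$-subset $U$ any point $y\in Y$ lies in \emph{at most one} translate of $U$, since two translates meeting at $y$ must coincide. Consequently, for a fixed $\overline{W}$ with $f(y)\in\overline{W}$ there is exactly one set of the form $hW_{\overline W}$ containing $y$, whereas if $f(y)\notin\overline{W}$ then no such set contains $y$ (as $f(h^{-1}y)=f(y)\notin\overline{W}$ forces $h^{-1}y\notin W_{\overline W}$). This gives an injection of $\{W\in\W\st y\in W\}$ into $\{\overline{W}\in\overline{\W}\st f(y)\in\overline{W}\}$, so $|\{W\in\W\st y\in W\}|\le|\{\overline{W}\in\overline{\W}\st f(y)\in\overline{W}\}|\le n+1$ for every $y$, which yields $\dim\W\le n$ as required. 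The only genuinely delicate point is organising this counting correctly in the presence of nontrivial stabilisers; the $\cF$-subset disjointness, supplied by the definition of an $\cF$-cover, is exactly what makes each fibre of the lift collapse to a single element per downstairs set.
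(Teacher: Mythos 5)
Your proposal is correct and follows essentially the same route as the paper: descend to the quotient $H\backslash Y$ (whose dimension equals $\dim Y$ by Proposition \ref{prop:quotientDimension}), refine there, pull back, and intersect each pulled-back set with the translates $hU$ of a chosen $U\in\U$ lying above it -- indeed your $hW_{\myol{W}}=f^{-1}(\myol{W})\cap hU$ is exactly the paper's $q^{-1}(V)\cap hU_V$. Your explicit injection argument for $\dim\W\le n$ is just a spelled-out version of the paper's observation that splitting each $q^{-1}(V)$ into disjoint pieces cannot increase the dimension of the cover.
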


\begin{proof} Denote the quotient map by $q$. By Proposition \ref{prop:quotientDimension}, we know that the open covering $\{q(U) \st U\in \U \}$ of $X / G$ has a refinement $\V$ of dimension at most $n$.

Clearly $q^{-1}(V)$ for $V\in \V$ is $G$-invariant, in particular it is a $G$-subset. The covering $\{q^{-1}(V) \st V\in \V \}$ has the same dimension as $\V$.

In order to obtain the required refinement of $\U$, it is enough to divide each $q^{-1}(V)$ into appropriate disjoint parts. Note that division into disjoint parts does not increase the dimension of a covering. Let $U_V$ be such an element of $\U$ that $V\subseteq q(U_V)$. Then clearly:
$$q^{-1}(V)\subseteq q^{-1}(q(U_V)) = \bigsqcup_{[g]\in G/S}gU_V,$$
where $S$ is the stabiliser of $U_V$. The required division is $\bigsqcup_{[g]} gU_V\cap q^{-1}(V).$ The covering $\W = \{g U_V \cap q^{-1}(V) \st V\in \V,\ g\in G\}$ is clearly a $G$-covering and refines $\U$. Moreover, if $\U$ is an $\cF$-cover, then $\W$ also is, as the stabiliser of $U_V\cap q^{-1}(V)$ is the same as the stabiliser of $U_V$.
\end{proof}

\end{appendices}

%
%

\section*{Acknowledgements}

This work is based on the author's Master's thesis defended in September 2014 at the Faculty of Mathematics, Informatics and Mechanics of the University of Warsaw. The author is grateful to the supervisor of the thesis, Piotr Nowak, for his guidance and help. The author wishes to thank Rufus Willett for explaining the quantitative relations between equivariant and classical asymptotic dimension. The author especially thanks the referee and Łukasz Garncarek for very helpful remarks that greatly improved the manuscript.

%
%

\begin{bibdiv}
\begin{biblist}

\bib{onProofs}{article}{
   AUTHOR = {Bartels, A.},
   title={On proofs of the Farrell-Jones conjecture},
   conference={
      title={Topology and geometric group theory},
   },
   book={
      series={Springer Proc. Math. Stat.},
      volume={184},
      publisher={Springer, [Cham]},
   },
   date={2016},
   pages={1--31},
   doi={10.1007/978-3-319-43674-6\_1},
}

\bib{coarseFlowSpaces}{article}{
   author={Bartels, A.},
   title={Coarse flow spaces for relatively hyperbolic groups},
   journal={Compos. Math.},
   volume={153},
   date={2017},
   number={4},
   pages={745--779},
   issn={0010-437X},
   doi={10.1112/S0010437X16008216},
}

\bib{BartelsBestvina}{article}{
    author = {Bartels, A.}
    author = {Bestvina, M.}
     title = {The Farrell-Jones Conjecture for mapping class groups}
    eprint = {arXiv:1606.02844}
      date = {2016}
}

\bib{flowForCAT0}{article}{
    AUTHOR = {Bartels, A.},
    AUTHOR = {L\"uck, W.},
     TITLE = {Geodesic flow for {$\rm{CAT}(0)$}-groups},
   JOURNAL = {Geom. Topol.},
    VOLUME = {16},
      YEAR = {2012},
    NUMBER = {3},
     PAGES = {1345--1391},
      ISSN = {1465-3060},
       DOI = {10.2140/gt.2012.16.1345},
       URL = {http://dx.doi.org/10.2140/gt.2012.16.1345}, 
}

\bib{Borel}{article}{
    AUTHOR = {Bartels, A.},
    AUTHOR = {L\"uck, W.},
     TITLE = {The {B}orel conjecture for hyperbolic and {${\rm
              CAT}(0)$}-groups},
   JOURNAL = {Ann. of Math. (2)},
    VOLUME = {175},
      YEAR = {2012},
    NUMBER = {2},
     PAGES = {631--689},
      ISSN = {0003-486X},
       DOI = {10.4007/annals.2012.175.2.5},
       URL = {http://dx.doi.org/10.4007/annals.2012.175.2.5},
}

\bib{coversForHyperbolic}{article}{
    AUTHOR = {Bartels, A.},
    AUTHOR = {L\"uck, W.},
    AUTHOR = {Reich, H.},
     TITLE = {Equivariant covers for hyperbolic groups},
   JOURNAL = {Geom. Topol.},
    VOLUME = {12},
      YEAR = {2008},
    NUMBER = {3},
     PAGES = {1799--1882},
      ISSN = {1465-3060},
       DOI = {10.2140/gt.2008.12.1799},
       URL = {http://dx.doi.org/10.2140/gt.2008.12.1799},
}

\bib{FarrellJonesForHyperbolic}{article}{
    AUTHOR = {Bartels, A.},
    AUTHOR = {L\"uck, W.},
    AUTHOR = {Reich, H.},
     TITLE = {The {$K$}-theoretic {F}arrell-{J}ones conjecture for
              hyperbolic groups},
   JOURNAL = {Invent. Math.},
    VOLUME = {172},
      YEAR = {2008},
    NUMBER = {1},
     PAGES = {29--70},
      ISSN = {0020-9910},
       DOI = {10.1007/s00222-007-0093-7},
       URL = {http://dx.doi.org/10.1007/s00222-007-0093-7},
}

\bib{GLnZ}{article}{
    AUTHOR = {Bartels, A.},
    AUTHOR = {L{\"u}ck, W.},
    AUTHOR = {Reich, H.},
    AUTHOR = {R{\"u}ping, Henrik},
     TITLE = {K- and {L}-theory of group rings over {$GL_n({\bf Z})$}},
   JOURNAL = {Publ. Math. Inst. Hautes \'Etudes Sci.},
    VOLUME = {119},
      YEAR = {2014},
     PAGES = {97--125},
      ISSN = {0073-8301},
       DOI = {10.1007/s10240-013-0055-0},
       URL = {http://dx.doi.org/10.1007/s10240-013-0055-0},
}

\bib{Bedlewo}{article}{
    AUTHOR = {Bell, G.},
    AUTHOR = {Dranishnikov, A.},
     TITLE = {Asymptotic dimension in B\k{e}dlewo},
   JOURNAL = {Topology Proc.},
    VOLUME = {38},
      YEAR = {2011},
     PAGES = {209--236},
      ISSN = {0146-4124},
}

\bib{BrownOzawa}{book}{
   author={Brown, N. P.},
   author={Ozawa, N.},
   title={$C^*$-algebras and finite-dimensional approximations},
   series={Graduate Studies in Mathematics},
   volume={88},
   publisher={American Mathematical Society, Providence, RI},
   date={2008},
   pages={xvi+509},
   isbn={978-0-8218-4381-9},
   isbn={0-8218-4381-8},
   doi={10.1090/gsm/088},
}

\bib{propertyAAndAsdim}{article}{
    AUTHOR = {Cencelj, M.},
    AUTHOR = {Dydak, J.},
    AUTHOR = {Vavpeti{\v{c}}, A.},
     TITLE = {Property {A} and asymptotic dimension},
   JOURNAL = {Glas. Mat. Ser. III},
    VOLUME = {47(67)},
      YEAR = {2012},
    NUMBER = {2},
     PAGES = {441--444},
      ISSN = {0017-095X},
       DOI = {10.3336/gm.47.2.17},
       URL = {http://dx.doi.org/10.3336/gm.47.2.17},
}

\bib{G-W-Yu}{article}{
	author = {Guentner, E.}
	author = {Willett, R.}
	author = {Yu, G.}
     title = {Dynamic asymptotic dimension: relation to dynamics, topology, coarse geometry, and $C^*$-algebras}
   journal = {Math. Ann.}
	  year = {2016},
     pages = {1--45},
      issn = {1432-1807},
       doi = {10.1007/s00208-016-1395-0},
}

\bib{long&thin}{article}{
    AUTHOR = {Kasprowski, D.},
    AUTHOR = {R\"uping, H.},
     TITLE = {Long and thin covers for cocompact flow spaces},
    EPRINT = {arXiv:1502.05001v1},
      DATE = {2015},
}

\bib{refinements}{article}{
    AUTHOR = {Mole, A.},
    AUTHOR = {R\"uping, H.},
     TITLE = {Equivariant Refinements},
    EPRINT = {arXiv:1308.2799v2},
      DATE = {2013},
}

\bib{dimension}{book}{
    AUTHOR = {Pears, A. R.},
     TITLE = {Dimension theory of general spaces},
 PUBLISHER = {Cambridge University Press, Cambridge, England-New
              York-Melbourne},
      YEAR = {1975},
     PAGES = {xii+428},
}

\bib{Roe}{book}{
    AUTHOR = {Roe, J.},
     TITLE = {Lectures on coarse geometry},
    SERIES = {University Lecture Series},
    VOLUME = {31},
 PUBLISHER = {American Mathematical Society, Providence, RI},
      YEAR = {2003},
     PAGES = {viii+175},
      ISBN = {0-8218-3332-4},
}

\bib{hyperbolicAsdim}{article}{
    AUTHOR = {Roe, J.},
     TITLE = {Hyperbolic groups have finite asymptotic dimension},
   JOURNAL = {Proc. Amer. Math. Soc.},
    VOLUME = {133},
      YEAR = {2005},
    NUMBER = {9},
     PAGES = {2489--2490 (electronic)},
      ISSN = {0002-9939},
       DOI = {10.1090/S0002-9939-05-08138-4},
       URL = {http://dx.doi.org/10.1090/S0002-9939-05-08138-4},
} 

\bib{GLn}{thesis}{
   AUTHOR = {R{\"u}ping, H.},
    TITLE = {{The Farrell-Jones conjecture for some general linear groups}},
     TYPE = {Ph.D. Thesis},
   SCHOOL = {Universit\"at Bonn, Bonn},
     DATE = {2013},
     NOTE = {\url{http://hss.ulb.uni-bonn.de/2013/3262/3262.pdf}},
}

\bib{SWZ}{article}{
  author = {Szabo, G.}
  author = {Wu, J.}
  author = {Zacharias, J.}
  title = {Rokhlin dimension for actions of residually finite groups}
  date = {2014}
  eprint = {arXiv:1408.6096v3}
}

\bib{CAT0}{article}{
    AUTHOR = {Wegner, C.},
     TITLE = {The {$K$}-theoretic {F}arrell-{J}ones conjecture for
              {CAT}(0)-groups},
   JOURNAL = {Proc. Amer. Math. Soc.},
    VOLUME = {140},
      YEAR = {2012},
    NUMBER = {3},
     PAGES = {779--793},
      ISSN = {0002-9939},
       DOI = {10.1090/S0002-9939-2011-11150-X},
       URL = {http://dx.doi.org/10.1090/S0002-9939-2011-11150-X},
}

\bib{virtuallySolvable}{article}{
   author={Wegner, C.},
   title={The Farrell-Jones conjecture for virtually solvable groups},
   journal={J. Topol.},
   volume={8},
   date={2015},
   number={4},
   pages={975--1016},
   doi={10.1112/jtopol/jtv026},
}

\bib{someNotesOnA}{article}{
   author={Willett, R.}
   title={Some notes on property A},
   conference={
      title={Limits of graphs in group theory and computer science},
   },
   book={
      publisher={EPFL Press, Lausanne},
   },
   date={2009},
   pages={191--281},
}

\end{biblist}
\end{bibdiv}

\end{document}